\newtheorem{theorem}{Theorem}[section]
\newtheorem{lemma}[theorem]{Lemma}
\newtheorem{proposition}[theorem]{Proposition}
\newtheorem{corollary}[theorem]{Corollary}
\newtheorem*{remark}{Remark}
\let\temp\phi
\let\phi\varphi
\let\varphi\temp
\begin{document}

\title[Observables and Strong One-Sided Chaos]
{Observables and Strong One-Sided Chaos in the Boltzmann-Grad Limit}

\author{Ryan Denlinger}

\begin{abstract}
Boltzmann's equation provides a microscopic model for the evolution of
dilute classical gases.
 A fundamental problem in
mathematical physics is to rigorously derive Boltzmann's equation 
starting from Newton's laws. In the 1970s,
Oscar Lanford provided such a derivation, for the hard sphere interaction,
on a small time interval. One of the subtleties of Lanford's original proof
was that the strength of convergence proven
 at positive times was much weaker than
that which had to be assumed at the initial time, which is at odds with
the idea of propagation of chaos. Several authors have addressed this
situation with various notions of strong one-sided chaos, which is the true
property which is propagated by the dynamics. We provide a new approach
to the problem based on duality and the evolution of observables; the
observables encode the detailed interaction and allow us to define a
new notion of strong one-sided chaos.  
\end{abstract}

\maketitle

\section{Introduction}
\label{intro}

Kinetic theory is concerned with the description of dilute gases at the microscopic level.
The fundamental equation of collisional kinetic theory is Boltzmann's equation, which is an
evolutionary partial differential equation (PDE) stated in terms of the phase-space
density $f(t,x,v)\geq 0$. Boltzmann's equation
describes the effects of free transport and binary collisions between particles.
In the case of hard spheres, Boltzmann's equation is written 
\begin{equation}
\label{eq:IN-boltz}
\left( \partial_t + v \cdot \nabla_x \right) f(t,x,v) = Q(f,f) (t,x,v)
\end{equation}
where $Q$ is the \emph{collision operator} 
\begin{equation}
Q(f,f) = Q^+ (f,f) - Q^- (f,f)
\end{equation}
\begin{equation}
Q^+ (f,f) = \int_{\mathbb{R}^d \times \mathbb{S}^{d-1}}
\left[ \omega \cdot (v_2-v)\right]_+ 
f(t,x,v^*) f(t,x,v_2^*) dv_2 d\omega
\end{equation}
\begin{equation}
Q^- (f,f) = \int_{\mathbb{R}^d \times \mathbb{S}^{d-1}}
\left[ \omega \cdot (v_2-v)\right]_+ 
f(t,x,v) f(t,x,v_2) dv_2 d\omega
\end{equation}
and
\begin{equation}
\begin{aligned}
v^* & = v + \omega \omega \cdot (v_2 - v) \\
v_2^* & = v_2 - \omega \omega \cdot (v_2 - v) 
\end{aligned}
\end{equation}
We refer to \cite{CIP1994} for a mathematical introduction to Boltzmann's equation.

One of the central problems in kinetic theory is to derive Boltzmann's equation starting
from
a system of $N$ particles interacting via a classical Hamiltonian. For
the hard sphere Boltzmann equation, the interaction is given by the hard sphere
potential (billiard balls). The scaling limit in which one derives Boltzmann's
equation is known as the \emph{Boltzmann-Grad limit}; in this limit,
we have $N$ identical hard spheres of diameter $\varepsilon$, with
$N\varepsilon^{d-1} = \ell^{-1}$ for some fixed $\ell > 0$. 
However, the Boltzmann-Grad scaling by itself \emph{does not} force a Boltzmann type
dynamics; it is also necessary to assume that the configurations of distinct particles
are independent from one another at some initial time.
Unfortunately, the independence between particles \emph{is not} propagated by the
$N$ particle dynamics. If independence breaks down at some positive time then the
validation of Boltzmann's equation is expected to fail.
 For this reason, one of the
key problems in validating Boltzmann's equation is to specify an appropriate notion of
independence, or factorization, among particles.\footnote{More generally, it is possible
to work with a notion of \emph{exchangeability} in place of \emph{independence}.
In this case one derives \emph{statistical solutions} of Boltzmann's equation.}

Lanford has shown, in the 1970s, that if the particles are asymptotically decorrelated in
a \emph{very strong sense} at the initial time, then a \emph{weaker} notion of
factorization is propagated to positive times. \cite{L1975,GSRT2014} Indeed, Lanford
assumes that the marginal associated to $s$ particles (for $s$ fixed) converges in
$L^\infty$ at the initial time. This type of convergence \emph{cannot} be propagated
to positive times, for the following reason. If the marginals converge in $L^\infty$
at some positive time, then all we need to do is reverse the particle velocities and
evolve forwards to again deduce Boltzmann's equation on the time interval $[t,t+\tau)$.
This is because $L^\infty$ is invariant under reversal of the velocities of all the particles.
 But if
we reverse the particle velocities and evolve forwards then we should actually obtain
the \emph{backwards Boltzmann equation} (this is Boltzmann's equation with 
a minus sign in front of $Q(f,f)$),
because this is the only possibility which is consistent with the evolution we have
already deduced on $[0,t]$. (This is due to the reversibility of 
Newton's laws.) We refer to \cite{CIP1994} for a detailed discussion of the issue of
irreversibility.

More generally, \emph{any} norm which is invariant under reversal
of particle velocities cannot be the correct norm for proving propagation of chaos,
by the same argument. The essential conflict is that Newton's laws are time-reversible
whereas Boltzmann's equation is irreversible (as evidenced by the \emph{H-theorem}).
This problem leads to the notion of \emph{strong one-sided chaos} (strong chaos),
which means that
 the topology of convergence in Lanford's theorem should be sensitive only to
particle configurations \emph{coming into} a collision. (Indeed, after a collision,
it is impossible for the particles to be completely decorrelated.) This already indicates that
strong chaos must be a very subtle notion, because it implies that the dynamics
 is determined by the values of functions evaluated along
\emph{very small subsets} of their domain of definition.\footnote{This is because the
typical length scale for a collision is of order $\varepsilon$.}  To make matters worse,
if we condition on the event that a collision \emph{does} happen, then the set of
points coming \emph{into} the collision is locally of the same measure as the set of points
going \emph{out} of the collision. This means that we cannot rely on abstract measure-theoretic
notions of regularity; we \emph{must} account for the details of the collision process.

The delicacy of the Boltzmann-Grad limit is the very reason that Lanford required
the $L^\infty$ convergence of the data in the first place. It seems difficult to
provide convergence \emph{at all scales}, as Lanford's theorem requires, without some
type of $L^\infty$ norm. Multiple authors have refined Lanford's theorem
into a strong chaos result by proving uniform convergence along suitiable subsets of the
reduced phase space. We refer to \cite{BGSRS2016,Ki1975,De2017,PSS2014} and Appendix A
of \cite{vBLLS1980} for a variety of results concerning uniform convergence and
strong chaos. The goal of the present work is to provide a new notion of strong chaos which
is much weaker than those which have previously appeared in the literature.
Instead of defining chaos via convergence relative to a single norm, we introduce
a sequence of seminorms which capture information at different scales.
Perhaps the most natural way to understand this new notion
of chaos is to view it as the $L^1$ norm restricted to certain singular sets which
arise from the dynamics.\footnote{We thank Mario Pulvirenti
for pointing this out to us.}

\textbf{Organization of the paper.} In Section \ref{sec:Ntn}, we define basic notation
following \cite{GSRT2014}. 
In Section \ref{sec:BBGKY}, we define the BBGKY and dual BBGKY hierarchies, and
quote several well-posedness results.
We prove a comparison principle for solutions of the dual BBGKY hierarchy in
Section \ref{sec:Comp}. In Section \ref{sec:Sing}, we use the dual BBGKY hierarchy
to define singular sets which are associated with the hard sphere dynamics.
In Section \ref{sec:TR} we develop a connection between solutions of the dual
BBGKY hierarchy and the traditional notion of a pseudo-trajectory.
Finally, in Section \ref{sec:PR}, we state and prove our main result on the
propagation of chaos.

\section*{Acknowledgements}
\label{sec:acknowledgements}

R.D. gratefully acknowledges support from a postdoctoral fellowship
at the University of Texas at Austin. The initial inspiration for
this paper came from a discussion with my PhD advisor, Nader Masmoudi,
at the 2014 Fields Medal Symposium in Toronto.
 Additional thanks go to
Nata{\v s}a Pavlovi{\' c}, Thomas Chen, Mario Pulvirenti,
Laure Saint-Raymond, and Sergio Simonella, for helpful
discussions and comments on the subject of strong chaos; and,
to Herbert Spohn for pointing out a result from Appendix A
of \cite{vBLLS1980}. Any errors in this work are mine alone.

\section{Notation}
\label{sec:Ntn}

We will consider $N$ identical
 hard spheres in the spatial domain $\mathbb{R}^d$, $d\geq 2$.
The spheres all have diameter $\varepsilon > 0$, their centers are located at
positions $x_1,\dots,x_N \in \mathbb{R}^d$, and their velocities are given by
$v_1,\dots,v_N \in \mathbb{R}^d$. The full configuration is written
$Z_N = (X_N,V_N) = (x_1,\dots,x_N,v_1,\dots,v_N)\in \mathbb{R}^{dN}\times
\mathbb{R}^{dN}$. The $N$ particle phase space is specified by the condition of
exclusion of hard spheres:
\begin{equation}
\mathcal{D}_N = \left\{\left. Z_N = (X_N,V_N) \in \mathbb{R}^{dN} \times 
\mathbb{R}^{dN} \right| \forall 1\leq i < j \leq N, \;
|x_i - x_j | > \varepsilon
\right\}
\end{equation}
As long as particles remain within $\mathcal{D}_N$, they continue along free trajectories:
$\dot{X}_N = V_N$, $\dot{V}_N = 0$. If particles collide (i.e. $Z_N \in \partial
\mathcal{D}_N$) then the velocities of the particles are transformed by the law of
specular reflection, then the free evolution continues.
The collective flow of $N$ hard spheres is written
$\psi_N^t : \mathcal{D}_N \rightarrow \mathcal{D}_N$; this is a well-defined measurable
measure-preserving map. \cite{Al1975,GSRT2014} We always enforce the Boltzmann-Grad
scaling $N\varepsilon^{d-1} = \ell^{-1}$ for a fixed parameter $\ell > 0$. We will also
require the reduced phase space $\mathcal{D}_s$ for $1\leq s \leq N$, which is the phase
space of $s$ identical hard spheres of diameter $\varepsilon$; the map 
$\psi_s^t : \mathcal{D}_s \rightarrow \mathcal{D}_s$ defines the flow of $s$ hard spheres.
If $Z_s \in \partial \mathcal{D}_s$ then we denote by $Z_s^*$ the image of $Z_s$ under
the collisional change of variables. In particular, if $x_j = x_i+\varepsilon \omega$
then $Z_s^* = (x_1,v_1,\dots,x_i,v_i^*,\dots,x_j,v_j^*,\dots,x_s,v_s)$ where
\begin{equation*}
v_i^* = v_i + \omega \omega \cdot (v_j-v_i)
\end{equation*}
\begin{equation*}
v_j^*  = v_j - \omega \omega \cdot (v_j-v_i)
\end{equation*}

We shall consider
an initial density $f_N (0) \in L^1 (\mathcal{D}_N)$, which is non-negative,
symmetric under particle interchange, and
normalized so that $\int_{\mathcal{D}_N} f_N (0,Z_N) dZ_N = 1$. The evolved state
$f_N (t)$, $t\geq 0$, is defined as the pushforward of $f_N (0)$ through $\psi_N^t$.
Since the hard sphere flow is measure-preserving, this means that
\begin{equation}
f_N (t,Z_N) = f_N (0,\psi_N^{-t} Z_N)
\end{equation}
We extend $f_N (t)$ by zero to be defined on all of $\mathbb{R}^{dN} \times\mathbb{R}^{dN}$.
Since $f_N (0)$ is symmetric under particle interchange, $f_N (t)$
must be symmetric as well. We define the marignals $f_N^{(s)} (t)$ for $1\leq s \leq N$
by the formula
\begin{equation}
f_N^{(s)} (t,Z_s) = \int_{\mathbb{R}^{d(N-s)} \times \mathbb{R}^{d(N-s)}}
f_N (t,Z_N) dz_{s+1} \dots dz_N
\end{equation}
The symmetry of $f_N$ implies that it does not matter which particles we integrate out,
and the marginals are likewise symmetric under particle interchange.
We also define 
\begin{equation}
E_s (Z_s) = \frac{1}{2} \sum_{i=1}^s |v_i|^2
\end{equation}
\begin{equation}
I_s (Z_s) = \frac{1}{2} \sum_{i=1}^s |x_i|^2
\end{equation}

\section{The BBGKY and Dual BBGKY Hierarchies}
\label{sec:BBGKY}

Let $f_N (t)$ be a solution of Liouville's equation (for the hard sphere
interaction), with marginals
$f_N^{(s)} (t)$ for $1 \leq s \leq N$. Then it is possible to show that the
marginals obey the following hierarchy of equations, known as the BBGKY
hierarchy (Bogoliubov-Born-Green-Kirkwood-Yvon):
\begin{equation}
\left( \partial_t + V_s \cdot \nabla_{X_s}\right) f_N^{(s)} (t,Z_s)
= (N-s)\varepsilon^{d-1} C_{s+1} f_N^{(s+1)} (t,Z_s)
\end{equation}
Here $f_N^{(s)} (t,Z_s^*) = f_N^{(s)} (t,Z_s)$ and the operator
$C_{s+1}$ is defined as follows:
\begin{equation}
C_{s+1} = \sum_{i=1}^s \left(
C_{i,s+1}^+ - C_{i,s+1}^- \right)
\end{equation}
\begin{equation}
\begin{aligned}
& C_{i,s+1}^- f_N^{(s+1)} =
 \int_{\mathbb{R}^d \times \mathbb{S}^{d-1}}
dv_{s+1} d\omega
\left[ \omega \cdot (v_{s+1}-v_i)\right]_-  \times \\
&\qquad \qquad \qquad \qquad 
 \times f_N^{(s+1)} ( t,x_1,v_1,\dots,x_i,v_i,\dots,x_s,v_s,
x_i+\varepsilon \omega,v_{s+1})
\end{aligned}
\end{equation}
\begin{equation}
\begin{aligned}
& C_{i,s+1}^+ f_N^{(s+1)} =
 \int_{\mathbb{R}^d \times \mathbb{S}^{d-1}}
dv_{s+1} d\omega
\left[ \omega \cdot (v_{s+1}-v_i)\right]_+  \times \\
&\qquad \qquad \qquad \qquad 
 \times f_N^{(s+1)} ( t,x_1,v_1,\dots,x_i,v_i^*,\dots,x_s,v_s,
x_i+\varepsilon \omega,v_{s+1}^*)
\end{aligned}
\end{equation}
We remark that it is possible to consider solutions of the BBGKY hierarchy which
do not arise as a consistent sequence of marginals. Such solutions are not
necessarily physically meaningful (e.g. in general the BBGKY hierarchy does
not preserve non-negativity of solutions). However, we will need to consider
general solutions of the BBGKY hierarchy in order to define the so-called
\emph{dual BBGKY hierarchy}. We always assume that the functions 
$f_N^{(s)}$ are symmetric under particle interchange.

Let $\Phi_N = \left\{ \phi_N^{(s)} \right\}_{1\leq s \leq N}$ be a sequence of
real-valued 
functions such that $\phi_N^{(s)}$ is defined on $\mathcal{D}_s$ and symmetric
under particle interchange. Furthermore,
let $F_N = \left\{ f_N^{(s)} \right\}_{1\leq s \leq N}$ be a sequence of
densities, again symmetric under particle interchange. Following
\cite{Ge2013}, we introduce the following duality bracket:
\begin{equation}
\label{eq:duality-1}
\left< \Phi_N,F_N\right> =
\sum_{s=1}^N \frac{1}{s!} \int_{\mathcal{D}_s}
 \phi_N^{(s)} (Z_s) f_N^{(s)} (Z_s) dZ_s 
\end{equation}
The dual BBGKY hierarchy is the evolution equation satisfied by the sequence
of functions $\Phi_N (t)$ under the condition that, for any solution 
$F_N (t)$ of the BBGKY hierarchy, the following holds
\begin{equation}
\label{eq:duality-2}
\left< \Phi_N (t),F_N (0) \right> =
\left< \Phi_N (0),F_N (t) \right>
\end{equation}
It is possible to show, by elementary computation, that the dual BBGKY hierarchy
for hard spheres is given by the following sequence of equations:
\begin{equation}
\label{eq:BBGKY-dual-1}
\left( \partial_t - V_s \cdot \nabla_{X_s} \right) \phi_N^{(s)} (t,Z_s) = 0
\qquad  \left( Z_s \in \mathcal{D}_s, \; s = 1,\dots,N \right)
\end{equation}
\begin{equation}
\label{eq:BBGKY-dual-2}
\begin{aligned}
\frac{\phi_N^{(s)}(t,Z_s^*)}{N-s+1} & + \phi_N^{(s-1)}(t,(Z_s^*)^{(i)})
+ \phi_N^{(s-1)}(t,(Z_s^*)^{(j)}) = \\
& =  \frac{\phi_N^{(s)}(t,Z_s)}{N-s+1}
 + \phi_N^{(s-1)}(t,Z_s^{(i)}) + \phi_N^{(s-1)}(t,Z_s^{(j)})
\end{aligned}
\end{equation}
\begin{equation*}
\quad \quad \quad \quad \quad \quad \quad \quad \quad 
 \left(Z_s\in\left(\Sigma_s (i,j)\times \mathbb{R}^{d s}\right)
\cap \partial \mathcal{D}_s,
\; s = 2,\dots,N\right)
\end{equation*}
Here $Z_s^{(i)} = (z_1,\dots,z_{i-1},z_{i+1},\dots,z_s)$. The dual
BBGKY hierarchy may be solved by applying induction on $s$ and the
method of characteristics. The value of $\phi_N^{(s)}$ is transported
freely along characteristics between collisions. Across any collision,
the value of $\phi_N^{(s)}$ experiences a ``jump'' which is determined
by $\phi_N^{(s-1)}$. Since the evolution of $\phi_N^{(1)}$ is trivially
determined, it is likewise possible to determine every
$\phi_N^{(s)}$ inductively using Duhamel's formula. We will sometimes
refer to solutions of the dual BBGKY hierarchy as
\emph{observables}.

We will quote two local well posedness theorems for the dual BBGKY
hierarchy. Both theorems are proven in \cite{De2017}, though the
proofs are based heavily on classical proofs of well-posedness
for the BBGKY hierarchy. \cite{L1975,IP1986,IP1989} 
We refer the reader to \cite{L1975,Ki1975,PSS2014,GSRT2014,De2017}
for standard well-posedness results concerning the BBGKY hierarchy.
Given parameters $\beta > 0$, $\mu \in \mathbb{R}$, let us define
the norm
\begin{equation}
\left\Vert \Phi_N \right\Vert_{\mathcal{L}^1_{\beta,\mu}}
= \sum_{s=1}^N \frac{1}{s!}
\int_{\mathcal{D}_s}
\left| \phi_N^{(s)} (Z_s)\right| e^{-\beta E_s (Z_s)}
e^{-\mu s} dZ_s
\end{equation}

\begin{theorem}
\label{thm:dual-LWP}
Suppose $\Phi_N (0) = \left\{ \phi_N^{(s)} (0,Z_s) \right\}_{1\leq s \leq N}$
is a sequence of functions, with each $\phi_N^{(s)} (0)$ defined on
$\mathcal{D}_s$ and symmetric under particle interchange. Furthermore,
suppose that $\left\Vert \Phi_N (0) \right\Vert_{\mathcal{L}^1_{\frac{1}{2}\beta,(\mu-1)}}
< \infty$ for some $\beta > 0$, $\mu \in \mathbb{R}$. There exists a constant
$C_d > 0$, depending only on $d$, such that if
$T_L < C_d \ell e^{\mu} \beta^{\frac{d+1}{2}}$, then there exists a unique
solution $\Phi_N (t)$ of the dual BBGKY hierarchy for $t \in [0,T_L]$ 
satisfying the bound
\begin{equation}
\sup_{0 \leq t \leq T_L}
\left\Vert \Phi_N (t) \right\Vert_{\mathcal{L}^1_{\beta,\mu}} \leq
\left\Vert \Phi_N (0) \right\Vert_{\mathcal{L}^1_{\frac{1}{2}\beta,(\mu-1)}}
\end{equation}
\end{theorem}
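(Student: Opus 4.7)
The strategy is to solve the dual hierarchy inductively in $s$ by the method of characteristics (with jumps at collisions prescribed by \eqref{eq:BBGKY-dual-2}), read off an iterated Duhamel expansion running downward from $s$ to $1$, and estimate the series in $\mathcal{L}^1_{\beta,\mu}$ using Gaussian velocity integrals together with the Boltzmann--Grad scaling. This parallels the classical BBGKY local well-posedness arguments of \cite{L1975,IP1986,IP1989}, but adapted to the dual setting in which each ``collision step'' decreases rather than increases the particle number.

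First, since the jump relation \eqref{eq:BBGKY-dual-2} only activates for $s\geq 2$, the component $\phi_N^{(1)}$ is solved by pure free transport, $\phi_N^{(1)}(t,x_1,v_1)=\phi_N^{(1)}(0,x_1+tv_1,v_1)$. For $s\geq 2$, the characteristics of \eqref{eq:BBGKY-dual-1} are the $s$-particle hard-sphere trajectories $\psi_s^\tau$, and at each collision instant \eqref{eq:BBGKY-dual-2} fixes the jump of $\phi_N^{(s)}$ as $(N-s+1)$ times the corresponding jump of $\phi_N^{(s-1)}(\cdot^{(i)})+\phi_N^{(s-1)}(\cdot^{(j)})$ (where $(i),(j)$ index removal of one of the two colliding particles). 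Integrating along trajectories yields a Duhamel representation
\begin{equation*}
\phi_N^{(s)}(t,Z_s)=\phi_N^{(s)}(0,\psi_s^tZ_s)+(N-s+1)\int_0^t\bigl(\mathcal{J}_s\phi_N^{(s-1)}\bigr)(\tau,\psi_s^{t-\tau}Z_s)\,d\tau,
\end{equation*}
where $\mathcal{J}_s$ encodes the sum over colliding pairs and the two particle-deletion insertions. Substituting this formula for $\phi_N^{(s-1)}$ into itself and iterating produces a finite telescoping expansion (terminating after at most $s-1$ levels at the free-transport representation of $\phi_N^{(1)}(0)$) expressing $\phi_N^{(s)}(t)$ as a sum of terms involving $\phi_N^{(s')}(0)$ for $s'\leq s$, integrated over time-ordered collision histories of the nested hard-sphere flows.

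Next I would bound the $\mathcal{L}^1_{\beta,\mu}$ norm of a term with $k$ jumps by changing variables from time-ordered collision instants to pre-collisional $(\omega,v_\text{new})$ data. Each such conversion contributes: a surface factor $\varepsilon^{d-1}$ from the constraint $|x_i-x_j|=\varepsilon$, which combines with $(N-s'+1)\varepsilon^{d-1}\leq \ell^{-1}$ via the Boltzmann--Grad scaling; a Gaussian velocity/angular integral of the form $\int[\omega\cdot(v-v_i)]_\pm e^{-\beta|v|^2/2}\,d\omega\,dv$, bounded by a constant times $\beta^{-(d+1)/2}$; and a time factor, summing to $T_L^k/k!$ after accounting for the time ordering. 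Measure-preservation and energy-conservation of $\psi_{s'}$ allow the Gaussian weight $e^{-\beta E_{s'}}$ to pass through free transport without loss, and the velocity integrals are absorbed using the spare weight (which accounts for the passage from $\beta/2$ on the input side to $\beta$ on the output side). Reindexing $s\mapsto s'=s-k$ in the outer sum converts $\frac{e^{-\mu s}}{s!}$ into $\frac{e^{-(\mu-1)s'}}{s'!}$ up to a residual factor $e^{-\mu k}$ per step, absorbing the interplay between the $\frac{1}{s!}$-weights and the $(N-s+1)$ prefactors.

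Summing the expansion over $k$ produces a geometric series in the dimensionless parameter $C_d T_L\,\ell^{-1}e^{-\mu}\beta^{-(d+1)/2}$, which is strictly less than one under the stated hypothesis $T_L<C_d\ell e^{\mu}\beta^{(d+1)/2}$; the resulting bound is precisely $\|\Phi_N(0)\|_{\mathcal{L}^1_{\beta/2,\mu-1}}$. Uniqueness follows by applying the same estimate to the difference of two candidate solutions with vanishing initial data. I expect the main obstacle to be the bookkeeping in the norm-estimate step: aligning the combinatorial prefactors $(N-s+1)$, the surface-velocity-angular integrals, and the symmetry weights $\frac{1}{s!}$ so that each collision step costs exactly one factor of $C_d\ell^{-1}e^{-\mu}\beta^{-(d+1)/2}$. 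The measure-theoretic handling of the $s$-particle flow on $\partial\mathcal{D}_s$ (removal of pathological configurations) is standard and can be imported from \cite{GSRT2014,De2017}.
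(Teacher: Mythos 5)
The paper does not prove Theorem \ref{thm:dual-LWP} itself but quotes it from \cite{De2017}, describing the method exactly as you do: solve the hierarchy by induction on $s$ via characteristics with the jump condition (\ref{eq:BBGKY-dual-2}), iterate Duhamel downward to level $1$, and run a Lanford/Illner--Pulvirenti-type weighted estimate in which each collision step costs a factor of order $\ell^{-1}e^{-\mu}\beta^{-(d+1)/2}T_L$, yielding the stated smallness condition. Your sketch is therefore the intended argument; the only loose point is the simultaneous appeal to a $T_L^k/k!$ gain and a geometric series --- the pair sums consume the $1/k!$, and it is the geometric series that actually forces the restriction on $T_L$, consistent with the theorem's hypothesis.
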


\begin{theorem}
\label{thm:dual-LWP-2}
If $\ell^{-1} e^{-\mu} \beta^{-\frac{d+1}{2}}$ is sufficiently small
(depending only on $d$) then for any $T>0$ we have the following:
Suppose $\Phi_N (0)= \left\{ \phi_N^{(s)} (0,Z_s) \right\}_{1\leq s \leq N}$ is a
sequence of functions, with each $\phi_N^{(s)} (0)$ defined on $\mathcal{D}_s$
and symmetric under particle interchange. Further suppose that, for some
$\beta > 0$ and $\mu \in \mathbb{R}$, there holds
\begin{equation}
\sum_{s=1}^N \frac{1}{s!} \int_{\mathcal{D}_s}
\left| \phi_N^{(s)} (0,Z_s) \right|
e^{-\frac{1}{2} \beta \left( E_s (Z_s) + I_s ((X_s - T V_s,V_s))\right)}
e^{-(\mu-1) s} dZ_s < \infty
\end{equation}
Then the dual BBGKY hierarchy has a unique solution for 
$t \in [0,T]$ and it satisfies
\begin{equation}
\begin{aligned}
& \sum_{s=1}^N \frac{1}{s!}
\int_{\mathcal{D}_s} \left| \phi_N^{(s)} (T,Z_s) \right|
e^{-\beta (E_s (Z_s) + I_s (Z_s))} e^{-\mu s} dZ_s \\
& \qquad \leq \sum_{s=1}^N \frac{1}{s!}
\int_{\mathcal{D}_s} \left| \phi_N^{(s)} (0,Z_s) \right|
e^{-\frac{1}{2} \beta (E_s (Z_s) + I_s ((X_s - T V_s,V_s)))}
e^{-(\mu-1)s} dZ_s
\end{aligned}
\end{equation}
\end{theorem}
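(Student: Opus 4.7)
The plan is to mirror the proof of Theorem~\ref{thm:dual-LWP} while replacing its constant-in-time exponential weight by a time-dependent, Galilean-adapted weight built from both the energy moment $E_s$ and the position moment $I_s$. This follows the classical Illner--Pulvirenti strategy for the BBGKY hierarchy, translated to the dual setting: the quadratic position weight $I_s$ supplies the additional integrability needed to propagate estimates over arbitrary time intervals, at the cost of requiring smallness of $\ell^{-1} e^{-\mu}\beta^{-(d+1)/2}$.

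Concretely, I would introduce the one-parameter family of norms
\begin{equation*}
\|\Phi\|_a := \sum_{s=1}^N \frac{1}{s!} \int_{\mathcal{D}_s} |\phi_N^{(s)}(Z_s)| \, e^{-\beta(E_s(Z_s) + I_s((X_s - aV_s, V_s)))}\, e^{-\mu s}\,dZ_s, \qquad a\in[0,T],
\end{equation*}
so that the hypothesis controls $\|\Phi(0)\|_T$ (with $\beta$ replaced by $\beta/2$ and $\mu$ by $\mu-1$), while the desired conclusion bounds $\|\Phi(T)\|_0$ via the identity $I_s((X_s - 0\cdot V_s, V_s)) = I_s(Z_s)$. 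Under pure backward free transport, the sliding norm $\|\Phi(t)\|_{T-t}$ is \emph{exactly invariant}: the change of variables $X_s \mapsto X_s + tV_s$ shows $\|\Phi(t)\|_{T-t} = \|\Phi(0)\|_T$ between collisions. I would then write $\phi_N^{(s)}(t)$ in Duhamel form along characteristics, using \eqref{eq:BBGKY-dual-2} to express the jump $\phi_N^{(s)}(t,Z_s^*) - \phi_N^{(s)}(t,Z_s)$ as a linear combination of $\phi_N^{(s-1)}$ evaluated at the reduced configurations $Z_s^{(i)}, Z_s^{(j)}, (Z_s^*)^{(i)}, (Z_s^*)^{(j)}$, and iterate inductively in $s$ to express $\phi_N^{(s)}(t)$ as a series of iterated collision integrals acting on the initial data $\phi_N^{(r)}(0)$ with $r \leq s$.

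The main technical obstacle is verifying that the position-moment weight is compatible with the collision boundary condition. At a collision between particles $i,j$ with $x_j = x_i + \varepsilon\omega$, positions are preserved while velocities are reflected; using $v_i^* - v_i = \omega\,\omega\cdot(v_j-v_i)$ and $v_j^* - v_j = -\omega\,\omega\cdot(v_j-v_i)$, a direct computation gives
\begin{equation*}
I_s((X_s - aV_s^*, V_s^*)) - I_s((X_s - aV_s, V_s)) = a \varepsilon\,[\omega \cdot (v_j - v_i)],
\end{equation*}
since the terms $|v_i|^2 + |v_j|^2$ and $|x_i|^2 + |x_j|^2$ are separately conserved and the only surviving cross term collapses by $(x_i - x_j)\cdot\omega = -\varepsilon$. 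For $a \leq T$ and $\varepsilon$ vanishing in the Boltzmann--Grad regime, this linear-in-velocity jump is absorbed into the energy weight with only a bounded multiplicative loss, which is precisely accounted for by the doubling $\beta \to \beta/2$ between the hypothesis and the conclusion.

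Finally, each added collision contributes a factor of order $\beta^{-(d+1)/2}$ from the velocity integral $\int |v|\, e^{-\beta|v|^2/2}\,dv$ against the Maxwellian weight, the prefactor $(N-s)\varepsilon^{d-1}$ supplies $\ell^{-1}$, and a factor $e^{-\mu}$ is absorbed from each step in the $s$-hierarchy. Under the smallness hypothesis, the iterated expansion is a geometric series with ratio $C_d \ell^{-1} e^{-\mu}\beta^{-(d+1)/2} < 1$, which sums to yield the stated bound on $\|\Phi(T)\|_0$. Uniqueness follows by applying the same estimate to the difference of two candidate solutions, which satisfies the linear homogeneous version of the dual hierarchy with zero initial data.
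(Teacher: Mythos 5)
Your route is the right one and matches the proof the paper is quoting: Theorem \ref{thm:dual-LWP-2} is proved in \cite{De2017} by transplanting the Illner--Pulvirenti global-in-time argument to the dual hierarchy, using exactly the sliding weight $e^{-\beta(E_s+I_s((X_s-aV_s,V_s)))}$ with $a=T-t$, which is invariant under the backward free flow, and your computation of the jump of $I_s((X_s-aV_s,V_s))$ across a collision (the value $a\varepsilon\,\omega\cdot(v_j-v_i)$, linear in velocity and hence absorbable by halving $\beta$) is correct and is the key compatibility check.

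There is, however, one step in your final accounting that, as written, would only reproduce the local-in-time Theorem \ref{thm:dual-LWP}. You attribute the factor $\beta^{-(d+1)/2}$ per collision to the velocity integral $\int |v|e^{-\beta|v|^2/2}\,dv$; if you estimate the collision operator that way and then integrate the creation times over $[0,T]$, each collision also produces a factor of $T$, and the iterated series converges only for $T$ small --- which is precisely the situation of Theorem \ref{thm:dual-LWP}. The point of the position weight is that it must be used \emph{in the time integral}: the relative-velocity factor $|\omega\cdot(v_{s+1}-v_i)|$ in the collision kernel is paired with the Gaussian $e^{-c\beta|x_i-(T-t_m)v_{s+1}|^2}$ carried by the created particle, and
\begin{equation*}
\int_0^{\infty} |v|\,e^{-c\beta|x-\tau v|^2}\,d\tau \;\leq\; C\,\beta^{-1/2}
\end{equation*}
uniformly in $x$ and $v$. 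It is this dispersive estimate, not the velocity integral alone, that yields a $T$-independent factor $C_d\,\ell^{-1}e^{-\mu}\beta^{-(d+1)/2}$ per collision and lets the geometric series close for arbitrary $T$ under the stated smallness condition. You state the right principle in your opening paragraph (``the quadratic position weight supplies the additional integrability''), but the closing bookkeeping needs to invoke it explicitly at the time-integration step; otherwise the argument does not actually deliver a global-in-time result.
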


\begin{remark}
In the context of the original paper by Illner \& Pulvirenti 
\cite{IP1986,IP1989}, the smallness condition in Theorem
\ref{thm:dual-LWP-2} is viewed as a largeness condition on the
mean free path $\ell$ (relative to $f_0$), which in turn regulates
the magnitude of
the nonlinearity. In this sense,
Theorem \ref{thm:dual-LWP-2} is a ``global-in-time'' result which
is valid for ``small'' initial data (i.e. small $f_0$).
However, it is also possible
to view $\ell$ as fixed, choose an \emph{arbitrary} (compactly
supported) initial observable $\Phi_N (0)$,
and then choose values of $\beta,\mu$ which meet the smallness
condition; this is always possible because $\mu$ ranges over
all of $\mathbb{R}$. This gives us a way to make global sense
of the dual BBGKY hierarchy for arbitrary observables. It is
important to realize that this \emph{does not} allow us to
relax the small time condition in Lanford's theorem (for large
$f_0$) because not
all observables can be paired against a given $f_0$ to yield
a finite duality pairing. In order to make effective use of the
global boundedness of observables, it is absolutely necessary to
understand cancellation effects; such an understanding is well
out of reach at the present time.
\end{remark}

\section{A Comparison Principle}
\label{sec:Comp}

The dual BBGKY hierarchy does not preserve positivity of solutions.
It is easy to see that typical non-negative data
$\left\{ \phi_N^{(s)} (0) \right\}_{1\leq s \leq N}$ will lead to
solutions $\left\{ \phi_N^{(s)} (t) \right\}_{1\leq s \leq N}$ that
cease to be non-negative for $t>0$.
Moreover, typical initial conditions lead to solutions that
strongly concentrate on very small subsets of the phase space.
 On the other hand, very
special initial conditions lead to trivial evolutions; for instance,
we can let $\phi_N^{(s)} (0) = \mathbf{1}_{\mathcal{D}_s}$ 
identically for all $s$, then
$\phi_N^{(s)} (t) = \mathbf{1}_{\mathcal{D}_s}$ for all $t>0$ and
all $s$.
 The goal of the present section is to construct an
alternative hierarchy which controls the dual BBGKY hierarchy
pointwise but which has better monotonicity properties.

The first step is to construct a \emph{lower envelope}
$\underline{\phi}_N^{(s)}$ and an \emph{upper envelope}
$\overline{\phi}_N^{(s)}$, which will control the dual BBGKY
hierarchy from below and above. The lower and upper envelopes are,
in turn, coupled to each other. The upper envelope solves the
following evolution equation:
\begin{equation}
\left( \partial_t - V_s \cdot \nabla_{X_s} \right) 
\overline{\phi}_N^{(s)} (t,Z_s) = 0
\qquad  \left( Z_s \in \mathcal{D}_s, \; s = 1,\dots,N \right)
\end{equation}
\begin{equation}
\begin{aligned}
\frac{\overline{\phi}_N^{(s)}(t,Z_s^*)}{N-s+1} & + 
\underline{\phi}_N^{(s-1)}(t,(Z_s^*)^{(i)})
+ \underline{\phi}_N^{(s-1)}(t,(Z_s^*)^{(j)}) = \\
& =  \frac{\overline{\phi}_N^{(s)}(t,Z_s)}{N-s+1}
 + \overline{\phi}_N^{(s-1)}(t,Z_s^{(i)}) + 
\overline{\phi}_N^{(s-1)}(t,Z_s^{(j)})
\end{aligned}
\end{equation}
\begin{equation*}
\quad \quad \quad \quad \quad \quad \quad \quad \quad 
 \left(Z_s\in\left(\Sigma_s (i,j)\times \mathbb{R}^{d s}\right)
\cap \partial \mathcal{D}_s^{\textnormal{post}},
\; s = 2,\dots,N\right)
\end{equation*}
The lower envelope, in turn, solves the following equation:
\begin{equation}
\left( \partial_t - V_s \cdot \nabla_{X_s} \right) 
\underline{\phi}_N^{(s)} (t,Z_s) = 0
\qquad  \left( Z_s \in \mathcal{D}_s, \; s = 1,\dots,N \right)
\end{equation}
\begin{equation}
\begin{aligned}
\frac{\underline{\phi}_N^{(s)}(t,Z_s^*)}{N-s+1} & + 
\overline{\phi}_N^{(s-1)}(t,(Z_s^*)^{(i)})
+ \overline{\phi}_N^{(s-1)}(t,(Z_s^*)^{(j)}) = \\
& =  \frac{\underline{\phi}_N^{(s)}(t,Z_s)}{N-s+1}
 + \underline{\phi}_N^{(s-1)}(t,Z_s^{(i)}) + 
\underline{\phi}_N^{(s-1)}(t,Z_s^{(j)})
\end{aligned}
\end{equation}
\begin{equation*}
\quad \quad \quad \quad \quad \quad \quad \quad \quad 
 \left(Z_s\in\left(\Sigma_s (i,j)\times \mathbb{R}^{d s}\right)
\cap \partial \mathcal{D}_s^{\textnormal{post}},
\; s = 2,\dots,N\right)
\end{equation*}

We are able to prove the following result by elementary
manipulations:
\begin{lemma}
\label{lemma:Comp-1}
Let $1 \leq S \leq N$ and assume that, for
$1\leq s \leq S$,
\begin{equation}
\underline{\phi}_N^{(s)} (0) \leq
\phi_N^{(s)} (0) \leq \overline{\phi}_N^{(s)} (0)
\end{equation}
Then for all $t \geq 0$ and all $1\leq s \leq S$,
\begin{equation}
\underline{\phi}_N^{(s)} (t) \leq
\phi_N^{(s)} (t) \leq \overline{\phi}_N^{(s)} (t)
\end{equation}
\end{lemma}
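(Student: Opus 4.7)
I proceed by strong induction on $s$. The base case $s = 1$ is immediate: at that level the dual BBGKY hierarchy and both envelope systems reduce to the same free transport equation $(\partial_t - V_1 \cdot \nabla_{X_1}) \phi_N^{(1)} = 0$ with no boundary jump condition, so the pointwise ordering at $t = 0$ is propagated trivially along the flow.

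For $2 \leq s \leq S$, assume $\underline{\phi}_N^{(k)}(t) \leq \phi_N^{(k)}(t) \leq \overline{\phi}_N^{(k)}(t)$ for all $k < s$ and all $t \geq 0$, and set $U := \overline{\phi}_N^{(s)} - \phi_N^{(s)}$ and $L := \phi_N^{(s)} - \underline{\phi}_N^{(s)}$. Subtracting the interior equations, $U$ satisfies the free transport equation $(\partial_t - V_s \cdot \nabla_{X_s}) U = 0$ in $\mathcal{D}_s$ (and analogously for $L$). Subtracting the coupled jump conditions at a boundary point $Z_s \in (\Sigma_s(i,j)\times \mathbb{R}^{ds}) \cap \partial \mathcal{D}_s^{\textnormal{post}}$ yields
\begin{align*}
\frac{U(t, Z_s^*) - U(t, Z_s)}{N-s+1} & = [\overline{\phi}_N^{(s-1)} - \phi_N^{(s-1)}](t, Z_s^{(i)}) + [\overline{\phi}_N^{(s-1)} - \phi_N^{(s-1)}](t, Z_s^{(j)}) \\
& \quad + [\phi_N^{(s-1)} - \underline{\phi}_N^{(s-1)}](t, (Z_s^*)^{(i)}) + [\phi_N^{(s-1)} - \underline{\phi}_N^{(s-1)}](t, (Z_s^*)^{(j)}),
\end{align*}
and an analogous identity for $L$ with the roles of $\overline{\phi}_N^{(s-1)}$ and $\underline{\phi}_N^{(s-1)}$ interchanged. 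By the inductive hypothesis every bracket is nonnegative, so $U(t, Z_s^*) \geq U(t, Z_s)$ and $L(t, Z_s^*) \geq L(t, Z_s)$: the pre-collisional value dominates the post-collisional value for both $U$ and $L$.

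The conclusion follows by the method of characteristics. For a.e.\ $Z_s \in \mathcal{D}_s$ the forward hard-sphere trajectory $\tau \mapsto \psi_s^\tau Z_s$ is well-defined on $[0, t]$ with only finitely many collisions. Set $\Psi_U(\tau) := U(t - \tau, \psi_s^\tau Z_s)$; a direct computation using $(\partial_t - V_s \cdot \nabla_{X_s}) U = 0$ in the interior shows $\Psi_U$ is constant on each inter-collision interval. At each collision time $\tau^* \in (0, t)$, with $Z^{\textnormal{pre}} := \lim_{\tau \nearrow \tau^*} \psi_s^\tau Z_s$ and $Z^{\textnormal{post}} := \psi_s^{\tau^*} Z_s = (Z^{\textnormal{pre}})^*$, the boundary inequality gives $\Psi_U(\tau^*) - \Psi_U(\tau^*-) = U(t - \tau^*, Z^{\textnormal{post}}) - U(t - \tau^*, Z^{\textnormal{pre}}) \leq 0$. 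Thus $\Psi_U$ is nonincreasing on $[0, t]$, and $U(t, Z_s) = \Psi_U(0) \geq \Psi_U(t) = U(0, \psi_s^t Z_s) \geq 0$ by the hypothesis on initial data. The identical argument for $L$ completes the induction.

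\textbf{Anticipated obstacle.} The only nontrivial step is verifying that the inductive hypothesis enters the jump formulas with the correct signs. The coupling of the upper and lower envelope systems is designed precisely so that $U$'s jump decomposes as a sum of terms of the form $\overline{\phi}_N^{(s-1)} - \phi_N^{(s-1)}$ and $\phi_N^{(s-1)} - \underline{\phi}_N^{(s-1)}$ (all nonnegative by hypothesis), and symmetrically for $L$; this is the content of the subtraction performed above. The matching of post-collisional and pre-collisional arguments on the right-hand side is what forces the sign of the characteristic jump to be favourable in the end. Once this bookkeeping is in hand, the propagation along hard-sphere characteristics is routine.
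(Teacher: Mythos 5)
Your proof is correct and is essentially the paper's argument: the paper's proof consists of the single line ``Apply induction on $S$,'' and your write-up supplies exactly the intended details (free transport along hard-sphere characteristics plus the sign bookkeeping showing that the jumps of $\overline{\phi}_N^{(s)}-\phi_N^{(s)}$ and $\phi_N^{(s)}-\underline{\phi}_N^{(s)}$ across the boundary are favourable, thanks to the cross-coupling of the two envelopes and the inductive hypothesis at level $s-1$). Nothing further is needed.
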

\begin{proof}
Apply induction on $S$.
\end{proof}
\begin{remark}
Note that Lemma \ref{lemma:Comp-1} is a very general result because
the lower and upper envelopes still see cancellations and may not
be non-negative.
\end{remark}

Let us now define a new hierarchy which solves the following equation:
\begin{equation}
\label{eq:Comp-phihat-1}
\left( \partial_t - V_s \cdot \nabla_{X_s} \right) 
\hat{\phi}_{N}^{(s)} (t,Z_s) = 0
\qquad  \left( Z_s \in \mathcal{D}_s, \; s = 1,\dots,N \right)
\end{equation}
\begin{equation}
\label{eq:Comp-phihat-2}
\begin{aligned}
\frac{\hat{\phi}_N^{(s)}(t,Z_s^*)}{N-s+1} & - 
\hat{\phi}_N^{(s-1)}(t,(Z_s^*)^{(i)})
- \hat{\phi}_N^{(s-1)}(t,(Z_s^*)^{(j)}) = \\
& =  \frac{\hat{\phi}_N^{(s)}(t,Z_s)}{N-s+1}
 + \hat{\phi}_N^{(s-1)}(t,Z_s^{(i)}) + 
\hat{\phi}_N^{(s-1)}(t,Z_s^{(j)})
\end{aligned}
\end{equation}
\begin{equation*}
\quad \quad \quad \quad \quad \quad \quad \quad \quad 
 \left(Z_s\in\left(\Sigma_s (i,j)\times \mathbb{R}^{d s}\right)
\cap \partial \mathcal{D}_s^{\textnormal{post}},
\; s = 2,\dots,N\right)
\end{equation*}
We assume $\hat{\phi}_N^{(s)} (0) \geq 0$ for all $s$; then, the
same holds for all $t>0$.
Then if $\overline{\phi}_N^{(s)} (t) = \hat{\phi}_N^{(s)}(t)$ and
$\underline{\phi}_N^{(s)} (t) = -\hat{\phi}_N^{(s)}(t)$ for all $s$
then the functions $\overline{\phi}_N^{(s)} (t)$ and
$\underline{\phi}_N^{(s)} (t)$ solve the equations for upper and
lower envelopes, respectively. In particular we deduce the
following result:
\begin{lemma}
\label{lemma:Comp-2}
Let $1\leq S \leq N$ and assume that, for $1\leq s \leq S$,
\begin{equation}
\left| \phi_N^{(s)} (0) \right| \leq
\hat{\phi}_N^{(s)} (0)
\end{equation}
Then for all $t\geq 0$ and all $1\leq s \leq S$,
\begin{equation}
\left| \phi_N^{(s)} (t) \right| \leq
\hat{\phi}_N^{(s)} (t)
\end{equation}
\end{lemma}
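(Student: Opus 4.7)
The plan is to execute the reduction indicated in the paragraph preceding the lemma: I will exhibit an explicit pair of upper and lower envelopes in terms of $\hat{\phi}_N^{(s)}$ and then appeal directly to Lemma \ref{lemma:Comp-1}.

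First I would verify the assertion, stated but not justified just before the lemma, that non-negativity of the initial data $\hat{\phi}_N^{(s)}(0) \geq 0$ is propagated in time, i.e. $\hat{\phi}_N^{(s)}(t) \geq 0$ for every $t \geq 0$ and $1 \leq s \leq S$. Proceeding by induction on $s$, the case $s=1$ is immediate from the backward free transport equation \eqref{eq:Comp-phihat-1}. For $s \geq 2$, between collisions the value of $\hat{\phi}_N^{(s)}$ is transported along a straight line, which preserves sign; across a boundary point in $\partial \mathcal{D}_s^{\textnormal{post}}$, the relation \eqref{eq:Comp-phihat-2} may be rearranged as
\[
\frac{\hat{\phi}_N^{(s)}(t,Z_s^*)}{N-s+1} = \frac{\hat{\phi}_N^{(s)}(t,Z_s)}{N-s+1} + \hat{\phi}_N^{(s-1)}(t,Z_s^{(i)}) + \hat{\phi}_N^{(s-1)}(t,Z_s^{(j)}) + \hat{\phi}_N^{(s-1)}(t,(Z_s^*)^{(i)}) + \hat{\phi}_N^{(s-1)}(t,(Z_s^*)^{(j)}),
\]
each summand on the right being non-negative by the inductive hypothesis in $s$ together with the preservation of sign along the preceding free flow interval.

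Next I would set $\overline{\phi}_N^{(s)}(t) := \hat{\phi}_N^{(s)}(t)$ and $\underline{\phi}_N^{(s)}(t) := -\hat{\phi}_N^{(s)}(t)$ for $1 \leq s \leq S$ and check that this pair solves the coupled upper/lower envelope system. All three hierarchies share the same backward transport operator, so the interior equations are immediate. Substituting the definitions into the upper envelope boundary condition produces
\[
\frac{\hat{\phi}_N^{(s)}(t,Z_s^*)}{N-s+1} - \hat{\phi}_N^{(s-1)}(t,(Z_s^*)^{(i)}) - \hat{\phi}_N^{(s-1)}(t,(Z_s^*)^{(j)}) = \frac{\hat{\phi}_N^{(s)}(t,Z_s)}{N-s+1} + \hat{\phi}_N^{(s-1)}(t,Z_s^{(i)}) + \hat{\phi}_N^{(s-1)}(t,Z_s^{(j)}),
\]
which is precisely \eqref{eq:Comp-phihat-2}; substituting into the lower envelope boundary condition produces the negation of the same identity, which also holds. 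This is purely algebraic bookkeeping.

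Finally, the hypothesis $|\phi_N^{(s)}(0)| \leq \hat{\phi}_N^{(s)}(0)$ is exactly the statement $\underline{\phi}_N^{(s)}(0) \leq \phi_N^{(s)}(0) \leq \overline{\phi}_N^{(s)}(0)$, so Lemma \ref{lemma:Comp-1} applied to this envelope pair gives $-\hat{\phi}_N^{(s)}(t) \leq \phi_N^{(s)}(t) \leq \hat{\phi}_N^{(s)}(t)$ for all $t \geq 0$, which is the desired conclusion. The only step that requires genuine thought is the positivity-propagation argument in Step 1, since the sign-matching in Step 2 is the observation already highlighted in the discussion preceding the lemma and the remainder is a direct invocation of Lemma \ref{lemma:Comp-1}.
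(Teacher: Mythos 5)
Your proposal is correct and follows exactly the route the paper takes: the paper gives no separate proof of this lemma but derives it in the preceding paragraph by setting $\overline{\phi}_N^{(s)} = \hat{\phi}_N^{(s)}$, $\underline{\phi}_N^{(s)} = -\hat{\phi}_N^{(s)}$, noting that non-negativity of $\hat{\phi}_N^{(s)}$ is propagated, and invoking Lemma \ref{lemma:Comp-1}. Your only addition is to spell out the positivity-propagation and sign-check details that the paper asserts without justification, and those details are right (the all-plus structure of the rearranged jump condition in \eqref{eq:Comp-phihat-2} is precisely what makes the $\hat{\phi}$-hierarchy sign-preserving).
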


\section{A Hierarchy of Singular Sets}
\label{sec:Sing}

The dynamics of the BBGKY hierarchy is determined by the initial
data $f_N^{(s)} (0)$ evaluated
along very singular subsets of the reduced phase space
$\mathcal{D}_s$. Our goal is to define these singular sets in
an \emph{abstract} fashion by relying on duality and
Lemma \ref{lemma:Comp-2}. We will see that by choosing initial
data $\hat{\phi}_N^{(s)} (0)$ carefully, we can force the dual
BBGKY hierarchy to identify the singular sets for us.

Let us define
\begin{equation}
\hat{\phi}_{N,j}^{(s)} (0) = 
\left\{
\begin{aligned}
& \mathbf{1}_{\mathcal{D}_s} \textnormal{ if } s=j \\
& 0 \textnormal{ otherwise}
\end{aligned}
\right.
\end{equation}
We let $\hat{\phi}_{N,j}^{(s)} (t)$ solve
(\ref{eq:Comp-phihat-1}-\ref{eq:Comp-phihat-2}). Note carefully
that
$\hat{\phi}_{N,j}^{(j+1)} (t)$ is, at each point, an integer 
multiple of $N-j$. Furthermore by \cite{BFK1998}, at any two points
where $\hat{\phi}_{N,j}^{(j+1)} (t)$ is non-zero, it is separated
by (at most) a \emph{constant multiple} depending on $j$. To a good
approximation, $\hat{\phi}_{N,j}^{(j+1)} (t)$ is a 
\emph{delta function} concentrated on a subset (in fact a
submanifold) of
$\mathcal{D}_{j+1}$. The easiest way to see this is that
the dual BBGKY hierarchy (as well as the hierarchy satisfied by
$\hat{\phi}_N^{(s)}$) is well-posed in some weighted
$\mathcal{L}^1$ space. \cite{De2017} The Lebesgue measure of the
support of $\hat{\phi}_N^{(s)} (t)$ is locally of order
$\mathcal{O} (N^{-1})$, which is 
$\mathcal{O}(\varepsilon^{d-1})$ due to the Boltzmann-Grad scaling.
This is consistent with the estimates provided in
\cite{BGSRS2016}.

All the above considerations remain valid when applied to
$\hat{\phi}_{N,j}^{(s)} (t)$ for $s > j+1$, except that we gain
a power of $N$ pointwise each time we increment $s$ by one. The
correct interpretation of this phenomenon is that we are
isolating submanifolds of increasing codimension, due to an
increasing number of collision constraints. In particular,
the measure of the support of
$\hat{\phi}_{N,j}^{(s)} (t)$, for $s>j$, is locally of
order $\mathcal{O}(\varepsilon^{(s-j)(d-1)})$, which is again
consistent with \cite{BGSRS2016}. To summarize, for each
$s\in\mathbb{N}$ and $N \gg s$, we have a hierarchy of singular
sets given by the support of $\hat{\phi}_{N,j}^{(s)} (t)$ for
$t$ large and $1 \leq j < s$. \emph{These are exactly the sets
which are relevant for the dynamics of the BBGKY hierarchy.}

In the remainder of this section we will quantify the above
considerations in a precise way. First we observe that the
functions $\hat{\phi}_{N,j}^{(s)} (t)$ are increasing in time.
\begin{lemma}
For any $1 \leq j < s$, any $0 < t < t^\prime$, and
almost every $Z_s \in \mathcal{D}_s$,
\begin{equation}
\hat{\phi}_{N,j}^{(s)} (t,Z_s) \leq
\hat{\phi}_{N,j}^{(s)} (t^\prime, Z_s)
\end{equation}
\end{lemma}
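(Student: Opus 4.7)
The plan is to exploit the linearity of the hierarchy (\ref{eq:Comp-phihat-1}--\ref{eq:Comp-phihat-2}) together with its preservation of pointwise non-negativity (as asserted in the paragraph just below (\ref{eq:Comp-phihat-2})). First I would fix $h > 0$ and form the time-shifted sequence $\tilde{\phi}_N^{(s')}(t) := \hat{\phi}_{N,j}^{(s')}(t + h)$ for $1 \leq s' \leq N$. Because the hierarchy is autonomous in $t$, this shifted sequence again solves (\ref{eq:Comp-phihat-1}--\ref{eq:Comp-phihat-2}), now with initial data $\hat{\phi}_{N,j}^{(s')}(h)$.

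Next I would consider the difference $\Delta^{(s')}(t) := \hat{\phi}_{N,j}^{(s')}(t + h) - \hat{\phi}_{N,j}^{(s')}(t)$. Each equation in the hierarchy is linear in the sequence $\{\hat{\phi}_N^{(s')}\}_{s'}$, so $\Delta$ itself satisfies (\ref{eq:Comp-phihat-1}--\ref{eq:Comp-phihat-2}) with initial data $\Delta^{(s')}(0) = \hat{\phi}_{N,j}^{(s')}(h) - \hat{\phi}_{N,j}^{(s')}(0)$. The key step is to verify that this difference of initial data is pointwise non-negative at \emph{every} level $1 \leq s' \leq N$. For $s' < j$ one has $\hat{\phi}_{N,j}^{(s')}(0) = 0$, and by induction on $s'$ starting from $s' = 1$, also $\hat{\phi}_{N,j}^{(s')}(t) \equiv 0$ (the jump relation at level $s'$ couples only to level $s' - 1$, which vanishes identically). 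For $s' = j$ the jump relation collapses to continuity across collisions (since $\hat{\phi}_{N,j}^{(j-1)} \equiv 0$), so $\hat{\phi}_{N,j}^{(j)}(t) = \mathbf{1}_{\mathcal{D}_j}$ is constant in $t$ and $\Delta^{(j)}(0) = 0$. For $s' > j$ we have $\hat{\phi}_{N,j}^{(s')}(0) = 0$ while $\hat{\phi}_{N,j}^{(s')}(h) \geq 0$ by the stated preservation of non-negativity, giving $\Delta^{(s')}(0) \geq 0$.

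With $\Delta^{(s')}(0) \geq 0$ established at every level, I would apply the preservation of non-negativity to $\Delta$ itself to deduce $\Delta^{(s')}(t) \geq 0$ for all $t \geq 0$ and all $s'$. Specializing to the level $s$ in the statement and setting $t' = t + h$, this gives $\hat{\phi}_{N,j}^{(s)}(t', Z_s) \geq \hat{\phi}_{N,j}^{(s)}(t, Z_s)$ for almost every $Z_s \in \mathcal{D}_s$, which is precisely the desired monotonicity.

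The main subtlety here is really book-keeping rather than analysis: since the jump relation at level $s$ depends on values at level $s-1$, one cannot restrict attention to the single level $s$ but must propagate non-negativity down the entire hierarchy from $s$ to $1$. Fortunately, the explicit form of the initial data $\hat{\phi}_{N,j}^{(s')}(0)$ makes the verification at each level transparent, and no further smallness condition or cancellation argument is required because everything reduces to a pointwise comparison rather than an $\mathcal{L}^1_{\beta,\mu}$ estimate.
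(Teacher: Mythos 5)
Your proof is correct, but it takes a different route from the paper's. The paper argues directly from the Duhamel representation: by induction on $s$, the value $\hat{\phi}_{N,j}^{(s)}(t,Z_s)$ is the sum of the (vanishing or constant) initial contribution plus one non-negative jump per collision met along the backward characteristic, and as $t$ increases both the number of jumps and (by the inductive hypothesis) the size of each jump are non-decreasing. You instead exploit the linearity and time-translation invariance of the hierarchy (\ref{eq:Comp-phihat-1}--\ref{eq:Comp-phihat-2}): the difference $\Delta^{(s')}(t)=\hat{\phi}_{N,j}^{(s')}(t+h)-\hat{\phi}_{N,j}^{(s')}(t)$ is again a solution, its initial data is non-negative at every level (zero for $s'\leq j$, and $\hat{\phi}_{N,j}^{(s')}(h)\geq 0$ for $s'>j$), and positivity preservation finishes the argument. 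Your packaging is cleaner in that it isolates exactly what is needed --- positivity preservation plus the explicit form of the initial data --- and avoids the slightly delicate bookkeeping of matching up collision times along the two backward trajectories. The trade-off is that it leans on two facts the paper supplies elsewhere: the positivity-preservation claim stated after (\ref{eq:Comp-phihat-2}) (which is itself proven by essentially the same Duhamel/jump mechanism the paper uses here, so no real logic is saved, only repackaged) and uniqueness of solutions, needed to identify $\Delta$ with the characteristics-built solution from its own initial data; the latter is standard here since the hierarchy is solved level by level along characteristics, but it is worth saying explicitly.
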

\begin{proof}
We write the Duhamel formula to express
$\hat{\phi}_{N,j}^{(s)} (t,Z_s)$ in terms of
$\hat{\phi}_{N,j}^{(s-1)}$; then apply induction on $s$.
The number of jumps in
the Duhamel formula is non-decreasing in time, and the size of each jump
is non-decreasing in time by the inductive hypothesis, hence the conclusion.
\end{proof}

The functions $\hat{\phi}_{N,j}^{(s)} (t)$ range over a discrete set.
\begin{lemma}
\label{lemma:Sing-DISC}
For any $1 \leq s < j$ and any $t>0$,
$\hat{\phi}_{N,j}^{(s)} (t) \equiv 0$; also, for any $t>0$,
$\hat{\phi}_{N,j}^{(j)} (t) \equiv \mathbf{1}_{\mathcal{D}_j}$.
For any $1 \leq j < s$, any $t>0$, and almost every
 $Z_s \in \mathcal{D}_s$,
\begin{equation}
0 \leq \hat{\phi}_{N,j}^{(s)} (t,Z_s)
\in (N-j)(N-j-1)\dots (N-s+1) \mathbb{Z}
\end{equation}
\end{lemma}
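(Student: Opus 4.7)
The plan is to argue by strong induction on $s$, using the method of characteristics to derive a Duhamel formula for $\hat{\phi}_{N,j}^{(s)}(t)$ from (\ref{eq:Comp-phihat-1}-\ref{eq:Comp-phihat-2}), and then to read off all three assertions case by case according to the relation between $s$ and $j$.

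First I would treat the base case $s=1$. Since $\mathcal{D}_1=\mathbb{R}^{2d}$ has empty boundary, the transport equation (\ref{eq:Comp-phihat-1}) is the whole story and $\hat{\phi}_{N,j}^{(1)}(t,z_1)=\hat{\phi}_{N,j}^{(1)}(0,x_1+tv_1,v_1)$. This immediately gives $\hat{\phi}_{N,j}^{(1)}\equiv 0$ when $j>1$ and $\hat{\phi}_{N,1}^{(1)}\equiv \mathbf{1}_{\mathcal{D}_1}$ when $j=1$, so all three parts of the lemma hold for $s=1$.

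For the inductive step, fix $s\geq 2$ and assume the lemma at level $s-1$. Solving (\ref{eq:Comp-phihat-1}-\ref{eq:Comp-phihat-2}) by characteristics produces a Duhamel representation of the schematic form
\begin{equation*}
\hat{\phi}_{N,j}^{(s)}(t,Z_s)\;=\;\hat{\phi}_{N,j}^{(s)}(0,\psi_s^{-t}Z_s)\;+\;(N-s+1)\!\!\!\sum_{\text{collisions along the backward characteristic}}\!\!\!\Sigma_{s-1,j},
\end{equation*}
where each $\Sigma_{s-1,j}$ is the sum of four values of $\hat{\phi}_{N,j}^{(s-1)}$, obtained by deleting one of the two colliding particles on either the pre- or the post-collisional side of the collision. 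Now split into three cases. If $s<j$ the initial datum vanishes and by induction $\hat{\phi}_{N,j}^{(s-1)}\equiv 0$, so $\hat{\phi}_{N,j}^{(s)}(t)\equiv 0$. If $s=j$ the initial datum is $\mathbf{1}_{\mathcal{D}_j}$ and by induction $\hat{\phi}_{N,j}^{(j-1)}\equiv 0$; the Duhamel formula then collapses to pure transport of $\mathbf{1}_{\mathcal{D}_j}$ along $\psi_j^t$, and since $\mathcal{D}_j$ is flow-invariant we conclude $\hat{\phi}_{N,j}^{(j)}(t)\equiv \mathbf{1}_{\mathcal{D}_j}$. If $s>j$ the initial datum again vanishes, and by induction every value of $\hat{\phi}_{N,j}^{(s-1)}$ is a non-negative integer multiple of $(N-j)(N-j-1)\cdots(N-s+2)$ (with the empty product equal to $1$ when $s=j+1$, consistent with $\hat{\phi}_{N,j}^{(j)}\in\{0,1\}$); multiplying by the prefactor $(N-s+1)$ gives contributions in $(N-j)(N-j-1)\cdots(N-s+1)\mathbb{Z}_{\geq 0}$, and summing preserves this property. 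Non-negativity is automatic from the monotonicity lemma stated just above, combined with $\hat{\phi}_{N,j}^{(s)}(0)\geq 0$.

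The main obstacle is the bookkeeping in the jump formula: the boundary condition (\ref{eq:Comp-phihat-2}) couples $\hat{\phi}_N^{(s)}$ on the pre- and post-collisional sides of $\partial\mathcal{D}_s^{\textnormal{post}}$ simultaneously, so before running the inductive argument one must solve (\ref{eq:Comp-phihat-2}) for the incoming value of $\hat{\phi}_N^{(s)}$ as an \emph{explicit} jump across the collision, with coefficient exactly $(N-s+1)$ in front of a sum of four non-negative values of $\hat{\phi}_{N,j}^{(s-1)}$. Once that rewriting is in place, the signs chosen in (\ref{eq:Comp-phihat-2}) ensure each crossing adds a \emph{non-negative} integer multiple of $(N-s+1)$, so the discrete structure propagates cleanly through the induction without any cancellations to keep track of.
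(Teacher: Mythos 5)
Your proof is correct and is exactly the argument the paper intends (its proof is simply ``apply induction on $s$''): you transport the data, rewrite the boundary condition (\ref{eq:Comp-phihat-2}) as a jump of size $(N-s+1)$ times a sum of four non-negative values of $\hat{\phi}_{N,j}^{(s-1)}$, and propagate the divisibility and non-negativity through the induction, with the three cases $s<j$, $s=j$, $s>j$ handled as you describe. The only cosmetic caveat is the direction of transport in your schematic Duhamel formula ($\psi_s^{-t}$ versus $\psi_s^{t}$ for the equation $\partial_t-V_s\cdot\nabla_{X_s}$), which is immaterial here since the initial data are flow-invariant constants.
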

\begin{proof}
Apply induction on $s$.
\end{proof}

\begin{proposition}
\label{prop:Sing-UB}
For any $1 \leq j < s$, any $t>0$, and almost every
$Z_s \in \mathcal{D}_s$,
\begin{equation}
0 \leq \hat{\phi}_{N,j}^{(s)} (t,Z_s) \leq
\prod_{j < k \leq s} 
\left(
4 (N-k+1) \left( 32 k^{\frac{3}{2}} \right)^{k^2}
\right)
\end{equation}
\end{proposition}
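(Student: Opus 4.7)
The plan is to prove the bound by induction on $s$, starting from the base case $s = j$ where Lemma \ref{lemma:Sing-DISC} already gives $\hat{\phi}_{N,j}^{(j)} \equiv \mathbf{1}_{\mathcal{D}_j} \leq 1$, matching the empty product on the right-hand side. Let $M_s$ denote the proposed upper bound at level $s$.

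For the inductive step, I would fix $s > j$, a time $t > 0$, and a generic $Z_s \in \mathcal{D}_s$ whose backward adjoint $s$-particle trajectory on $[0,t]$ is well-defined and meets only generic collisions. Tracing this backward characteristic, $\hat{\phi}_{N,j}^{(s)}$ is transported freely between collisions by (\ref{eq:Comp-phihat-1}), and at each collision the boundary relation (\ref{eq:Comp-phihat-2}) produces a jump
\begin{equation*}
\hat{\phi}_{N,j}^{(s)}(t, Z_s^*) = \hat{\phi}_{N,j}^{(s)}(t, Z_s) + (N-s+1)\Bigl[\hat{\phi}_{N,j}^{(s-1)}(t, Z_s^{(i)}) + \hat{\phi}_{N,j}^{(s-1)}(t, Z_s^{(j)}) + \hat{\phi}_{N,j}^{(s-1)}(t, (Z_s^*)^{(i)}) + \hat{\phi}_{N,j}^{(s-1)}(t, (Z_s^*)^{(j)})\Bigr].
\end{equation*}
By the inductive hypothesis each of the four $\hat{\phi}_{N,j}^{(s-1)}$ terms is bounded by $M_{s-1}$, so every jump has magnitude at most $4(N-s+1)M_{s-1}$. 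Since $\hat{\phi}_{N,j}^{(s)}(0, \cdot) = 0$ for $s > j$, the value at $(t, Z_s)$ is precisely the sum of jumps encountered along the backward trajectory, and positivity was already observed in the discussion preceding Lemma \ref{lemma:Comp-2}.

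The conclusion then follows by invoking the Burago--Ferleger--Kononenko result \cite{BFK1998}: the number of collisions of any $s$-particle hard-sphere trajectory is controlled by a purely geometric-combinatorial constant, which in the quantitative form used here is $(32 s^{3/2})^{s^2}$. Combining the per-collision bound with the collision count yields
\begin{equation*}
\hat{\phi}_{N,j}^{(s)}(t, Z_s) \leq (32 s^{3/2})^{s^2} \cdot 4(N-s+1) \cdot M_{s-1} = M_s,
\end{equation*}
which closes the induction.

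The main obstacle will be the careful appeal to the BFK bound: I need to confirm that the backward adjoint $s$-particle flow admits the same collision count as the forward hard-sphere flow (which should follow from time-reversibility and the fact that only the $s$ ``tracer'' particles contribute to the characteristic, not all $N$), and that the specific numerical constant $(32 s^{3/2})^{s^2}$ is the version appropriate here. Everything else is essentially bookkeeping: the factor $4$ in $M_s$ accounts for the four $\hat{\phi}_{N,j}^{(s-1)}$ terms in the boundary relation, the factor $(N-s+1)$ is the prefactor on the jump, and the ``almost every'' clause disposes of the measure-zero set of non-generic configurations where the trajectory fails to be well-defined or a collision is non-transverse.
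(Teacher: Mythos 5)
Your proposal is correct and follows exactly the route the paper intends: the paper's proof is the one-line instruction ``use induction on $s$ and the collision bound from \cite{BFK1998},'' and your argument --- bounding each boundary jump by $4(N-s+1)M_{s-1}$ via the inductive hypothesis, noting the initial datum vanishes for $s>j$, and counting jumps along the backward characteristic by the BFK bound $(32\,s^{3/2})^{s^2}$ --- is precisely the intended expansion of that sketch. The only point you flag as an obstacle (applicability of the collision bound to the backward flow) is indeed resolved by time-reversibility, together with the paper's remark that the spatial domain is $\mathbb{R}^d$, where the BFK bound is uniform in time.
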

\begin{proof}
Use induction on $s$ and the collision bound from \cite{BFK1998}.
Note carefully that the spatial domain is $\mathbb{R}^d$.
\end{proof}

\begin{corollary}
\label{cor:Sing-COMP}
For any $1 \leq j < s$, any $t>0$, almost every point
$\tilde{Z}_s \in \mathcal{D}_s$ such that
$\hat{\phi}_{N,j}^{(s)} (t,\tilde{Z}_s) \neq 0$, and almost every
$Z_s \in \mathcal{D}_s$,
\begin{equation}
0 \leq \hat{\phi}_{N,j}^{(s)} (t,Z_s) \leq
\hat{\phi}_{N,j}^{(s)} (t,\tilde{Z}_s)
\prod_{j < k \leq s}
 \left( 4 \left( 32 k^{\frac{3}{2}}\right)^{k^2} \right)
\end{equation}
\end{corollary}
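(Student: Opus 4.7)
The plan is to combine the three ingredients already assembled: non-negativity from the construction of $\hat{\phi}_{N,j}^{(s)}$, the pointwise upper bound from Proposition \ref{prop:Sing-UB}, and the discreteness statement from Lemma \ref{lemma:Sing-DISC}. The key observation is that the extra factor $\prod_{j<k\leq s}(N-k+1)$ appearing in the upper bound of Proposition \ref{prop:Sing-UB} exactly matches the minimum positive value dictated by the discreteness of $\hat{\phi}_{N,j}^{(s)}(t)$; so whenever the function is non-zero somewhere, that nonzero value alone absorbs all of the $N$-dependence in the pointwise bound.

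Concretely, I would proceed as follows. First I would note that Lemma \ref{lemma:Sing-DISC} gives $\hat{\phi}_{N,j}^{(s)}(t,Z_s) \in (N-j)(N-j-1)\cdots(N-s+1)\,\mathbb{Z}_{\geq 0}$ for almost every $Z_s$, so a nonzero value of $\hat{\phi}_{N,j}^{(s)}(t,\tilde{Z}_s)$ satisfies
\begin{equation*}
\hat{\phi}_{N,j}^{(s)}(t,\tilde{Z}_s) \geq \prod_{j<k\leq s}(N-k+1).
\end{equation*}
Next, at an arbitrary $Z_s \in \mathcal{D}_s$, Proposition \ref{prop:Sing-UB} provides
\begin{equation*}
0 \leq \hat{\phi}_{N,j}^{(s)}(t,Z_s) \leq \prod_{j<k\leq s}\Bigl(4(N-k+1)\bigl(32 k^{3/2}\bigr)^{k^2}\Bigr).
\end{equation*}
Dividing the latter by the former cancels every factor of $(N-k+1)$ and yields exactly the claimed inequality with the $N$-independent constant $\prod_{j<k\leq s}\bigl(4(32k^{3/2})^{k^2}\bigr)$.

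There is essentially no obstacle here: the corollary is a direct arithmetic consequence of the two preceding results. The only subtlety worth flagging is the compatibility of the ``almost every'' quantifiers. Both Lemma \ref{lemma:Sing-DISC} and Proposition \ref{prop:Sing-UB} hold off a Lebesgue-null exceptional set, so the comparison holds for $\tilde{Z}_s$ and $Z_s$ outside a common null set, which matches the statement of the corollary.
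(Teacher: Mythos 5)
Your proof is correct and is exactly the argument the paper intends: the corollary follows by combining the discreteness lower bound on a nonzero value from Lemma \ref{lemma:Sing-DISC} with the pointwise upper bound of Proposition \ref{prop:Sing-UB}, so that the factors $(N-k+1)$ cancel. The paper's own proof is just the one-line instruction to apply those two results, and your write-up fills in the same arithmetic.
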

\begin{proof}
Apply Lemma \ref{lemma:Sing-DISC} and
Proposition \ref{prop:Sing-UB}.
\end{proof}

Motivated by Corollary \ref{cor:Sing-COMP}, 
for $0 \leq k < s$ and $T>0$, let us define 
\begin{equation}
\mathcal{W}_s^k (T) =
\left\{ Z_s \in \mathcal{D}_s \left|
\hat{\phi}_{N,s-k}^{(s)} (T,Z_s) \neq 0 \right. \right\}
\end{equation}
We have that
$\mathcal{W}_s^k (T_1) \subset \mathcal{W}_s^k (T_2)$ whenever
$0 < T_1 < T_2$.

\begin{proposition}
\label{prop:Sing-SIZE}
There exists a constant $C_d > 0$ such that the following is true:
For any $0 < k < s$, any $\beta > 0$, and any $T>0$, in the
Boltzmann-Grad scaling $N\varepsilon^{d-1} = \ell^{-1}$, there holds
\begin{equation}
\begin{aligned}
& \int_{\mathbb{R}^{2ds}} \mathbf{1}_{\mathcal{W}_s^k (T)}
e^{-\beta \left( E_s (Z_s) + I_s (Z_s) \right)} dZ_s 
\leq
C(d,\beta)^{s+k} \ell^{-k} 
\frac{\prod_{q=0}^{k-1} (s-q)}
{\prod_{q=1}^k (N-s+q)}
\end{aligned}
\end{equation}
\end{proposition}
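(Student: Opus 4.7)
The plan is to reduce the measure estimate on $\mathcal{W}_s^k(T)$ to a weighted $\mathcal{L}^1$ bound on $\hat{\phi}_{N,s-k}^{(s)}(T)$, using the quantization proved in Lemma \ref{lemma:Sing-DISC}, and then to extract such a bound from Theorem \ref{thm:dual-LWP-2} via an optimal choice of the parameter $\mu$.

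First I would write down a pointwise inequality. By Lemma \ref{lemma:Sing-DISC} applied with $j=s-k$, the function $\hat{\phi}_{N,s-k}^{(s)}(T,Z_s)$ is almost everywhere a non-negative integer multiple of $\prod_{q=1}^k (N-s+q)$; in particular, at any point of $\mathcal{W}_s^k(T)$ it is bounded below by this product. Hence almost everywhere
\[
\mathbf{1}_{\mathcal{W}_s^k (T)}(Z_s) \, \leq \, \frac{\hat{\phi}_{N,s-k}^{(s)}(T,Z_s)}{\prod_{q=1}^k (N-s+q)},
\]
so it suffices to estimate $\int_{\mathcal{D}_s} \hat{\phi}_{N,s-k}^{(s)}(T,Z_s)\, e^{-\beta(E_s+I_s)}\, dZ_s$ from above.

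For this I would apply Theorem \ref{thm:dual-LWP-2} to the $\hat{\phi}$-hierarchy; the proof transfers verbatim because the Duhamel-based $\mathcal{L}^1_{\beta,\mu}$ estimates in \cite{De2017} rely on the triangle inequality applied to the dual-BBGKY jump relation, and the $\hat{\phi}$-hierarchy is exactly the one saturating these bounds (replacing signed sums by positive sums). The initial datum of $\hat{\phi}_{N,s-k}$ is $\mathbf{1}_{\mathcal{D}_{s-k}}$ at level $s-k$ and zero elsewhere, so its initial norm is a Gaussian integral which, after the substitution $x_i \mapsto x_i + T v_i$, evaluates to at most $\frac{1}{(s-k)!}\, C(d,\beta)^{s-k}\, e^{-(\mu-1)(s-k)}$, with no residual $T$-dependence. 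Extracting the $s$-th term of the output sum (which carries a factor $s!\, e^{\mu s}$) yields
\[
\int_{\mathcal{D}_s} \hat{\phi}_{N,s-k}^{(s)}(T,Z_s)\, e^{-\beta(E_s+I_s)}\, dZ_s \, \leq \, \frac{s!}{(s-k)!}\, e^{\mu k + (s-k)}\, C(d,\beta)^{s-k}.
\]

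The factor $\ell^{-k}$ in the final bound emerges from an optimal choice of $\mu$: selecting $\mu$ at the smallest value compatible with the smallness hypothesis of Theorem \ref{thm:dual-LWP-2}, one has $e^{\mu}\simeq C(d)\,\beta^{-(d+1)/2}\,\ell^{-1}$, and hence $e^{\mu k}\leq C(d,\beta)^{k}\,\ell^{-k}$. Using $s!/(s-k)! = \prod_{q=0}^{k-1}(s-q)$ and absorbing the remaining $e^{s-k}$ and $C(d,\beta)^{s-k}$ into a single constant $C(d,\beta)^{s+k}$, one arrives at the desired inequality. The only non-routine point is verifying that Theorem \ref{thm:dual-LWP-2} carries over to the $\hat{\phi}$-hierarchy, but as noted this is automatic from the structure of the $\mathcal{L}^1$ estimates; everything else is bookkeeping.
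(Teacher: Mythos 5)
Your proposal is correct and follows essentially the same route as the paper, whose proof is just the two-line remark that Theorem \ref{thm:dual-LWP-2} adapts to the $\hat{\phi}$-hierarchy and that one then invokes Lemma \ref{lemma:Sing-DISC}. You have simply filled in the intended bookkeeping — the pointwise lower bound $\prod_{q=1}^k(N-s+q)$ on the support, the $T$-independent Gaussian evaluation of the initial norm, and the extraction of $\ell^{-k}$ from the optimal choice of $\mu$ — all of which checks out.
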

\begin{proof}
The proof of Theorem
 \ref{thm:dual-LWP-2} is easily adapted to apply to the
functions $\hat{\phi}_{N,j}^{(s)}$; then we use Lemma \ref{lemma:Sing-DISC}.
\end{proof}

\begin{remark}
Proposition \ref{prop:Sing-SIZE} implies that, for any bounded
set $B \subset \mathbb{R}^{2ds}$, the set
$B\cap \mathcal{W}_s^k (T)$  has measure of order
$\mathcal{O}(\varepsilon^{k(d-1)})$; a similar result was found
 in \cite{BGSRS2016}.
\end{remark}

\begin{proposition}
\label{prop:Sing-UB-2}
Consider the solution to the equations
(\ref{eq:Comp-phihat-1}-\ref{eq:Comp-phihat-2}) with initial
data
\begin{equation}
\hat{\phi}_N^{(s)} (0,Z_s) =
\left\{
\begin{aligned}
& N^k
\mathbf{1}_{\mathcal{W}_{s_1}^k (T)} \quad & \textnormal{ if }
s = s_1 \\
& 0  & \textnormal{ otherwise}
\end{aligned}
\right.
\end{equation}
Then for a.e. $t>0$, and a.e. $Z_s \in \mathcal{D}_s$, we have
\begin{equation}
0 \leq \hat{\phi}_N^{(s)} (t) \leq
\left\{
\begin{aligned}
& 0 & \textnormal{ if } s<s_1 \\
& C(K,s_1) N^{k+s-s_1} \mathbf{1}_{\mathcal{W}_{s}^{k+s-s_1} (T+t)} &
\textnormal{ if } s_1 \leq s \leq K \\
& \infty & \textnormal{ otherwise}
\end{aligned}
\right.
\end{equation}
\end{proposition}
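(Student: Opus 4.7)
The plan is by induction on $s$, treating the cases $s<s_1$, $s=s_1$, and $s_1<s\leq K$ separately. For $s<s_1$, a downward induction on $s$ (base $s=1$, where (\ref{eq:Comp-phihat-1}) has no collisional source) shows that both the initial data and the forcing from $\hat{\phi}_N^{(s-1)}$ vanish identically, so (\ref{eq:Comp-phihat-2}) degenerates to continuity across collisions and $\hat{\phi}_N^{(s)}\equiv 0$. For the base case $s=s_1$, the source from level $s_1-1$ again vanishes, so (\ref{eq:Comp-phihat-1})--(\ref{eq:Comp-phihat-2}) reduce to pure hard-sphere transport:
\[
\hat{\phi}_N^{(s_1)}(t,Z_{s_1}) = N^k\,\mathbf{1}_{\mathcal{W}_{s_1}^k(T)}\bigl(\psi_{s_1}^{-t}Z_{s_1}\bigr).
\]
To bound this by $N^k\,\mathbf{1}_{\mathcal{W}_{s_1}^k(T+t)}$ I would first establish that $\hat{\phi}_{N,s_1-k}^{(s_1)}$, which by definition cuts out $\mathcal{W}_{s_1}^k$, is non-decreasing along forward hard-sphere trajectories: between collisions it is transported freely, and at each collision the jump in (\ref{eq:Comp-phihat-2}) is non-negative because $\hat{\phi}_{N,s_1-k}^{(s_1-1)}\geq 0$ enters with a positive coefficient.

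For the inductive step $s_1<s\leq K$, I would apply the Duhamel formula associated to (\ref{eq:Comp-phihat-1})--(\ref{eq:Comp-phihat-2}) at level $s$. Since $\hat{\phi}_N^{(s)}(0)=0$, only the jump contributions survive: $\hat{\phi}_N^{(s)}(t,Z_s)$ is a finite sum indexed by backward collisions of $\psi_s^{-(t-\tau)}Z_s$ for $\tau\in[0,t]$, each contributing $(N-s+1)$ times a sum of four evaluations of $\hat{\phi}_N^{(s-1)}(\tau,\cdot)$ at $(s-1)$-particle configurations obtained by deleting one of the colliding particles. The total number of collisions is at most $(32s^{3/2})^{s^2}$ by \cite{BFK1998}. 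By the inductive hypothesis each such value is majorized by $C(K,s_1)\,N^{k+s-1-s_1}\,\mathbf{1}_{\mathcal{W}_{s-1}^{k+s-1-s_1}(T+\tau)}$; multiplying by $(N-s+1)\leq N$ and absorbing all $K$- and $s_1$-dependent combinatorial factors into a new $C(K,s_1)$ produces the claimed prefactor $C(K,s_1)\,N^{k+s-s_1}$. The cutoff at $s=K$ reflects the fact that these combinatorial constants blow up with $s$, which is why no quantitative claim is made for $s>K$.

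The main obstacle is to identify the resulting indicator with $\mathbf{1}_{\mathcal{W}_s^{k+s-s_1}(T+t)}(Z_s)$. My strategy is to run the very same Duhamel expansion, this time for $\hat{\phi}_{N,s_1-k}^{(s)}$ evaluated at $(Z_s,T+t)$. The arithmetic identity $(s-1)-(k+s-1-s_1)=s_1-k$ says that non-vanishing of $\hat{\phi}_N^{(s-1)}$ at a deleted configuration in $\mathcal{W}_{s-1}^{k+s-1-s_1}(T+\tau)$ is equivalent to non-vanishing of $\hat{\phi}_{N,s_1-k}^{(s-1)}(T+\tau,\cdot)$ at that same point. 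The jump relation (\ref{eq:Comp-phihat-2}) applied to $\hat{\phi}_{N,s_1-k}^{(s)}$ at the corresponding collision, combined with non-negativity of every term, then forces $\hat{\phi}_{N,s_1-k}^{(s)}(T+\tau,\cdot)\neq 0$ at the post-collisional point; forward monotonicity of $\hat{\phi}_{N,s_1-k}^{(s)}$ propagates non-vanishing from $T+\tau$ to $T+t$, which is precisely the condition $Z_s\in\mathcal{W}_s^{k+s-s_1}(T+t)$. The delicate part is the combinatorial bookkeeping that matches each of the four deleted configurations (arising from pre- versus post-collisional $i,j$) with a corresponding point of non-vanishing of $\hat{\phi}_{N,s_1-k}^{(s)}$ on the $s$-particle trajectory.
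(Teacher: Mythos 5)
Your argument is correct, and at bottom it rests on the same mechanism as the paper's proof: everything is a comparison with the time-shifted reference solution $\hat{\phi}_{N,s_1-k}^{(\cdot)}(T+t)$, whose support is by definition $\mathcal{W}_s^{k+s-s_1}(T+t)$. The difference is one of economy. The paper makes the comparison once, at the level of initial data: by Lemma \ref{lemma:Sing-DISC} the nonzero values of $\hat{\phi}_{N,s_1-k}^{(s_1)}(T)$ are at least $(N-s_1+k)\cdots(N-s_1+1)\geq C(s_1)^{-1}N^k$, so $N^k\mathbf{1}_{\mathcal{W}_{s_1}^k(T)}\leq C(s_1)\,\hat{\phi}_{N,s_1-k}^{(s_1)}(T)$; the monotone structure of the hierarchy (\ref{eq:Comp-phihat-1})--(\ref{eq:Comp-phihat-2}) (all jump terms enter with positive sign, so domination between two non-negative solutions is propagated, exactly as in Lemma \ref{lemma:Comp-2}) then gives $\hat{\phi}_N^{(s)}(t)\leq C(s_1)\,\hat{\phi}_{N,s_1-k}^{(s)}(T+t)$ for all $s$, and the size and support bounds are read off from Proposition \ref{prop:Sing-UB} and the definition of $\mathcal{W}_s^{k+s-s_1}$. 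Your Duhamel induction on $s$, the collision count from \cite{BFK1998}, and the collision-by-collision matching of deleted configurations in your final paragraph amount to re-proving, inline, both this propagation of domination and Proposition \ref{prop:Sing-UB} itself. That is legitimate and self-contained, but note that your support identification is exactly the comparison principle in disguise: once you observe that the backward characteristics of $\hat{\phi}_N^{(s)}$ and of $\hat{\phi}_{N,s_1-k}^{(s)}(T+\cdot)$ coincide, the pointwise inequality between the two solutions (established at $t=0$) already forces the inclusion of supports, with no bookkeeping of the four deleted configurations required. (Two minor slips: the induction for $s<s_1$ is upward, not downward; and the sign convention for the transport in (\ref{eq:Comp-phihat-1}) should be checked when you write $\psi_{s_1}^{-t}$, though either sign is handled by the same monotonicity argument.)
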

\begin{proof}
It is enough to observe that $\hat{\phi}_N^{(s)} (0)$ is controlled
from above (up to a constant depending on $s_1$) by
$\hat{\phi}_{N,s_1-k}^{(s)} (T)$. The same control from above is
propagated to positive times, and then we can apply
Proposition \ref{prop:Sing-UB}.
\end{proof}

\begin{remark}
Proposition \ref{prop:Sing-UB-2} shows that the sets $\mathcal{W}_s^k (T)$
are not artifacts of our choice of constant functions for the initial
data. Indeed, even if we re-start the dual BBGKY flow with a singular
set as initial data, the further evolution is again concentrated on the
same family of singular sets.
\end{remark}

\section{Observables and Pseudo-Trajectories}
\label{sec:TR}

Pseudo-trajectories are fictitious trajectories which may be used
to express the solution of the BBGKY hierarchy in terms of the 
initial data; they are a standard tool in the analysis of BBGKY-type
hierarchies. \cite{L1975,GSRT2014} There is a close connection between
observables and pseudo-trajectories; for instance, we can see by the
fundamental duality relation (\ref{eq:duality-2}) that observables
allow us to express the solution of the BBGKY hierarchy in terms of
the initial data. In some sense, we may
 view observables as a functional representation of pseudo-trajectories.

Recall that $\psi_s^t : \mathcal{D}_s \rightarrow \mathcal{D}_s$ denotes
the flow of $s$ identical hard spheres of diameter $\varepsilon$.
We denote by $T_s (t)$ the operator which simply translates along
trajectories:
\begin{equation}
\left( T_s (t) f^{(s)} \right) (Z_s) =
f^{(s)} \left( \psi_s^{-t} Z_s \right)
\end{equation}
The BBGKY hierarchy may be written in integral form using Duhamel's
formula:
\begin{equation}
\label{eq:TR-duhamel-0}
f_N^{(s)} (t) = T_s (t) f_N^{(s)} (0) + (N-s) \varepsilon^{d-1} \int_0^t
T_s (t-t_1) C_{s+1} f_N^{(s+1)} (t_1) dt_1
\end{equation}
A similar expression can be obtained for $f_N^{(s+1)} (t_1)$ and plugged
into (\ref{eq:TR-duhamel-0}); 
this process can be repeated a finite number of times.
Ultimately one is able to express any marginal
$f_N^{(s)} (t)$ in terms of the initial data; that is,
\begin{equation}
\label{eq:TR-duhamel-1}
\begin{aligned}
& f_N^{(s)} (t) = \sum_{k=0}^{N-s} a_{N,k,s}
 \int_0^t \int_0^{t_1} \dots \int_0^{t_{k-1}} dt_k \dots dt_1 \times \\
& \qquad \qquad \qquad 
\times T_s (t-t_1) C_{s+1} T_{s+1} (t_1-t_2) \dots
C_{s+k} T_{s+k} (t_k) f_N^{(s+k)} (t_k) 
\end{aligned}
\end{equation}
where
\begin{equation}
\label{eq:TR-anks}
a_{N,k,s} = 
\frac{(N-s)!}{(N-s-k)!} \varepsilon^{k(d-1)}
\end{equation}
We will reformulate (\ref{eq:TR-duhamel-1}) using pseudo-trajectories.

Pseudo-trajectories define a fictitious dynamics involving a variable
number of particles. Let us be given $Z_s \in \mathcal{D}_s$,
a time $t>0$, creation times $0 < t_k \leq \dots \leq t_1 \leq t$, creation velocities
$v_{s+j} \in \mathbb{R}^d$, impact parameters $\omega_j \in \mathbb{S}^{d-1}$, and
indices $i_j \in \left\{ 1,2,\dots,s+j-1\right\}$. The symbol
\begin{equation}
Z_{s,s+k} \left[ Z_s,t; \left\{ t_j,v_{s+j},\omega_j,i_j \right\}_{j=1}^k \right]
\end{equation}
means that we start with the given particles $Z_s$,
and evolve \emph{backwards} under the $s$ particle flow $\psi_s^t$ for a time
$t-t_1$. Then we create a particle adjacent to the $i_1$ particle with velocity $v_1$ and
impact parameter $\omega_1$,
perform a collisional change of variables if needed, then continue the particle flow
backwards for a time $t_1 - t_2$, and so on. The end state, denoted
by $Z_{s,s+k} [\dots]$, is a configuration in
$\mathbb{R}^{d(s+k)} \times \mathbb{R}^{d(s+k)}$. Associated with
each pseudo-trajectory is an iterated collision kernel
\begin{equation}
b_{s,s+k} \left[ Z_s,t; \left\{ t_j,v_{s+j},\omega_j,i_j \right\}_{j=1}^k \right]
\end{equation}
 Particle creations are usually
referred to as \emph{collisions}, whereas collisions induced by the dynamics (not
creations) are usually called \emph{recollisions}. We refer to \cite{PSS2014,GSRT2014,De2017} for
precise definitions of psuedo-trajectories.

\begin{remark}
The pseudo-trajectory is not well-defined for impact paramters which would force
particles to overlap at the time of particle creation.
\end{remark}

We may now re-write (\ref{eq:TR-duhamel-1}) in the following form:
\begin{equation}
\begin{aligned}
& f_N^{(s)} (t,Z_s) = \sum_{k=0}^{N-s} a_{N,k,s} \times \\
& \times \sum_{i_1 = 1}^s \dots \sum_{i_k = 1}^{s+k-1} \int_0^t \dots
\int_0^{t_{k-1}} \int_{\mathbb{R}^{dk}} 
\int_{(\mathbb{S}^{d-1})^k} \left( \prod_{m=1}^k d\omega_m dv_{s+m} dt_m
\right) \times \\
& \times \left( b_{s,s+k} [\cdot] f_N^{(s+k)} (0,Z_{s,s+k} [\cdot]) \right)
\left[ Z_s,t; \left\{ t_j,v_{s+j},\omega_j,i_j \right\}_{j=1}^k \right]
\end{aligned}
\end{equation}
Let $E \subset \mathcal{D}_s$ be a bounded measurable set which is symmetric
under interchange of particle indices. Then we may write
\begin{equation}
\label{eq:TR-duhamel-E}
\begin{aligned}
& \int_{\mathcal{D}_s} \mathbf{1}_{Z_s \in E} f_N^{(s)} (t,Z_s) dZ_s =
\sum_{k=0}^{N-s} a_{N,k,s} \int_{\mathcal{D}_s} dZ_s \mathbf{1}_{Z_s \in E}
 \times \\
& \times \sum_{i_1 = 1}^s \dots \sum_{i_k = 1}^{s+k-1} \int_0^t \dots
\int_0^{t_{k-1}} \int_{\mathbb{R}^{dk}} 
\int_{(\mathbb{S}^{d-1})^k} \left( \prod_{m=1}^k d\omega_m dv_{s+m} dt_m
\right) \times \\
& \times \left( b_{s,s+k} [\cdot] f_N^{(s+k)} (0,Z_{s,s+k} [\cdot]) \right)
\left[ Z_s,t; \left\{ t_j,v_{s+j},\omega_j,i_j \right\}_{j=1}^k \right]
\end{aligned}
\end{equation}
Let $\Phi_{N,E}$ be the solution of the dual BBGKY hierarchy with 
initial data
\begin{equation}
\phi_{N,E}^{(s^\prime)} (0,Z_{s^\prime}) = 
\left\{
\begin{aligned}
& \mathbf{1}_{Z_{s^\prime} \in E} \qquad  & \textnormal{ if } s^\prime = s \\
& 0 & \textnormal{ otherwise}
\end{aligned} 
\right.
\end{equation}
Using (\ref{eq:duality-1}-\ref{eq:duality-2}) we may now write
\begin{equation}
\label{eq:TR-duality-E}
\begin{aligned}
& \frac{1}{s!} \int_{\mathcal{D}_s}
\mathbf{1}_{Z_s \in E} f_N^{(s)} (t,Z_s) dZ_s  \\
& \qquad \qquad \qquad  =
\sum_{k=0}^{N-s} \frac{1}{(s+k)!}
\int_{\mathcal{D}_{s+k}} \phi_{N,E}^{(s+k)} (t,Z_{s+k}) 
f_N^{(s+k)} (0,Z_{s+k}) dZ_{s+k}
\end{aligned}
\end{equation}
Comparing (\ref{eq:TR-duhamel-E}) and (\ref{eq:TR-duality-E}), and recalling
that $F_N (0)$ is an arbitrary sequence of symmetric functions, we conclude
the following identity:
\begin{equation}
\label{eq:TR-duality-E-2}
\begin{aligned}
&   \phi_{N,E}^{(s+k)} (t,Z_{s+k})
=\sum_\alpha 
 a_{N,k,s}  b_{s,s+k} \left[ Z_s^{\alpha},t, 
\left\{ t_j^\alpha,v_{s+j}^\alpha,\omega_j^\alpha,i_j^\alpha
\right\}_{j=1}^k \right] 
\times \\
& \qquad \qquad \qquad \qquad 
\times \left| \det \frac{\partial Z_{s,s+k} \left[ Z_s^\alpha,t;
\left\{ t_j^\alpha,v_{s+j}^\alpha,\omega_j^\alpha,i_j^\alpha
 \right\}_{j=1}^k\right]}
{\partial Z_s \partial t_1\dots \partial t_k
\partial v_{s+1} \dots \partial v_{s+k} \partial \omega_1 \dots \partial
\omega_k } \right|^{-1}
\end{aligned}
\end{equation}
The (finite) sum $\sum_\alpha$ runs over all pseudo-trajectories that initially
contain $s$ particles with $Z_s^\alpha \in E$,
and end at a point (with $s+k$ particles) of the form $\sigma Z_{s+k}$ for some
$\sigma \in \mathcal{S}_{s+k}$.\footnote{Note that two configurations
$\sigma_1 Z_{s+k}$ and $\sigma_2 Z_{s+k}$ such that
 $\sigma_1^{-1} \sigma_2$ leaves fixed
the last $k$ particle indices will correspond to physically indistinguishable
 collections of 
pseudo-trajectories. We do not double-count in this case.}
Here $\mathcal{S}_s$ is the symmetric group
on $s$ letters which acts by interchange of particle indices; namely,
$(\sigma {Z_s})_j = z_{\sigma (j)}$.

Comparing (\ref{eq:TR-duality-E-2}) with 
(\ref{eq:BBGKY-dual-1}-\ref{eq:BBGKY-dual-2}), we realize that each
term appearing in the sum $\sum_\alpha$ in
(\ref{eq:TR-duality-E-2}) is associated with a contribution to
$\phi_{N,E}^{(s+k)}$ equal to
\begin{equation}
\label{eq:TR-duality-3}
\pm (N-s-k+1)(N-s-k+2)\dots (N-s)
\end{equation}
Hence, in view of (\ref{eq:TR-anks}), we must have
\begin{equation}
\label{eq:TR-PTR-1}
\begin{aligned}
& \left| \det \frac{\partial Z_{s,s+k} \left[ Z_s^\alpha,t;
\left\{ t_j^\alpha,v_{s+j}^\alpha,\omega_j^\alpha,i_j^\alpha
 \right\}_{j=1}^k\right]}
{\partial Z_s \partial t_1\dots \partial t_k
\partial v_{s+1} \dots \partial v_{s+k} \partial \omega_1 \dots \partial
\omega_k } \right| \\
& \qquad \qquad \qquad \qquad = \varepsilon^{k(d-1)}
\left| b_{s,s+k} \left[ Z_s^{\alpha},t, 
\left\{ t_j^\alpha,v_{s+j}^\alpha,\omega_j^\alpha,i_j^\alpha
\right\}_{j=1}^k \right] \right| 
\end{aligned}
\end{equation}
We remark that (\ref{eq:TR-PTR-1}) appears in \cite{PS2015II} as the
result of a direct computation. The above
argument supplies an alternative approach to deriving (\ref{eq:TR-PTR-1}).

Finally we note that a formula similar to (\ref{eq:TR-duality-E-2}) is
also available for the functions $\hat{\phi}_{N,j}^{(s)}$ defined in
Section (\ref{sec:Sing}). In particular, we easily deduce an
alternative definition for the sets $\mathcal{W}_s^k (T)$. Let us
define
\begin{equation}
\mathcal{V}_s^k (T) = \left\{ Z_s \in \mathcal{D}_s \left|
\begin{aligned}
& \exists Z_{s-k} \in \mathcal{D}_{s-k}, 0< t_k < \dots < t_1 < t < T, \\
& v_{s+1},\dots,v_{s+k},\omega_1,\dots,\omega_k,
i_1,\dots,i_k, \\
& \textnormal{such that} \\
& \qquad Z_s = Z_{s-k,s} \left[ Z_{s-k},t; \left\{ t_j,v_{s+j},\omega_j,i_j
\right\}_{j=1}^k \right]
\end{aligned}
\right.
\right\}
\end{equation}
Then we have
\begin{equation}
\mathcal{W}_s^k (T) = \bigcup_{\sigma \in \mathcal{S}_{s}}
\sigma \mathcal{V}_s^k (T)
\end{equation}
Notice that for any bounded set $B \subset \mathbb{R}^{2ds}$ which is symmetric
under particle interchange, we have
\begin{equation}
\left| B \cap \mathcal{V}_s^k (T) \right| \leq
\left| B \cap \mathcal{W}_s^k (T) \right| \leq
(s+k)! \left| B \cap \mathcal{V}_s^k (T) \right|
\end{equation}
Hence the estimate from Proposition \ref{prop:Sing-SIZE} (in particular the
remark immediately following the proposition) applies equally well to
either $\mathcal{V}_s^k (T)$ or $\mathcal{W}_s^k (T)$.

\section{Propagation of Chaos}
\label{sec:PR}

We will require two auxiliary sets to state our main result.
These sets appear in our proof for technical reasons and could
actually be removed from the main theorem.
Note that $\eta > 0$ is a small parameter which can depend
on $\varepsilon$.
\begin{equation}
\mathcal{K}_s = \left\{ Z_s = (X_s,V_s) \in \mathcal{D}_s \left|
\forall \tau > 0,\;
\psi_s^{-\tau} Z_s = (X_s -V_s \tau, V_s)
\right. \right\}
\end{equation}
\begin{equation}
\mathcal{U}_s^\eta = \left\{ Z_s \in \mathcal{D}_s \left|
\inf_{1\leq i < j \leq s} |v_i - v_j|>\eta \right. \right\}
\end{equation}
Using the sets defined above, we introduce an $L^1$ seminorm defined as follows:
\begin{equation}
\label{eq:seminorm-1}
\left\Vert f^{(s)} \right\Vert_{\varepsilon,s,k,\eta,T^\prime,R}
= \varepsilon^{-k(d-1)}
\left\Vert f^{(s)} \mathbf{1}_{\mathcal{K}_s  \cap \mathcal{U}_s^\eta
\cap \mathcal{V}_s^k (T^\prime)} \mathbf{1}_{(E_s +I_s)(Z_s) \leq R^2} \right\Vert_{L^1_{Z_s}}
\end{equation}
Note carefully $\left\Vert f^{(s)} \right\Vert_{\varepsilon,s,k,\eta,T^\prime,R}
\leq C(s,k,T^\prime,R) \left\Vert f^{(s)} \right\Vert_\infty$. In particular, since the
constant does not depend on $\varepsilon$,
\emph{any estimate which can be carried out in $L^\infty$ requires no further comment
in our new topology}.
 For this reason, most of the developments of
\cite{De2017} can be carried over trivially. We will discuss only the differences which arise
in our proof (which actually amounts to just one error term).

We are now ready to state our main result.

\begin{theorem}
\label{thm:chaos-1}
Suppose that the Boltzmann equation (\ref{eq:IN-boltz}) has a non-negative solution
$f(t)$ for $t\in [0,T]$, with $\int f(t) dx dv = 1$. Also, suppose there exists
$\beta_T > 0$ such that
\begin{equation}
\sup_{0\leq t \leq T} \sup_{x\in \mathbb{R}^d} \sup_{v\in \mathbb{R}^d}
e^{\frac{1}{2} \beta_T |v|^2} f(t,x,v) < \infty
\end{equation}
and $f(t) \in W^{1,\infty} (\mathbb{R}^d \times \mathbb{R}^d)$ for $t\in [0,T]$.
Let $f_N (0)$ be a symmetric probability density on $\mathcal{D}_N$ for each
$N\in \mathbb{N}$, and enforce the Boltzmann-Grad scaling $N\varepsilon^{d-1}=\ell^{-1}$.
Suppose that there exists a $\tilde{\beta}_T > 0$,
$\tilde{\mu}_T \in \mathbb{R}$ such that the marginals $f_N^{(s)}$ of $f_N$
satisfy
\begin{equation}
\sup_{N\in \mathbb{N}} \sup_{0\leq t \leq T}
\sup_{1\leq s \leq N} \sup_{Z_s \in \mathcal{D}_s}
e^{\tilde{\beta}_T E_s (Z_s)} e^{\tilde{\mu}_T s}
\left| f_N^{(s)} (t,Z_s) \right| < \infty
\end{equation}
Let $\eta (\varepsilon) = \sqrt{\varepsilon}$.
Assume that
for all $s\in \mathbb{N}$, $0\leq k < s$, $T^\prime > 0$, and $R>0$, we have
\begin{equation}
\limsup_{N\rightarrow \infty} \left\Vert f_N^{(s)} (0) - f^{\otimes s} (0) 
\right\Vert_{\varepsilon,s,k,\eta(\varepsilon),T^\prime,R} = 0
\end{equation}
Then for all $s\in \mathbb{N}$, $0\leq k < s$, $T^\prime > 0$, $R>0$, and
$t\in [0,T]$, we have
\begin{equation}
\limsup_{N\rightarrow \infty} \left\Vert f_N^{(s)} (t) - f^{\otimes s} (t) 
\right\Vert_{\varepsilon,s,k,\eta(\varepsilon),T^\prime,R} = 0
\end{equation}
\end{theorem}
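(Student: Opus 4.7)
I would adapt the Lanford-type pseudo-trajectory comparison of \cite{De2017,GSRT2014} to the new seminorm. Fix $s,k,T',R$ and $t\in[0,T]$. Apply the Duhamel expansion (\ref{eq:TR-duhamel-1}) to $f_N^{(s)}(t,Z_s)$ and the analogous Boltzmann-hierarchy expansion to $f^{\otimes s}(t,Z_s)$, each truncated at a level $K(\varepsilon)\to\infty$ slowly (with tails controlled via the local well-posedness of Theorem \ref{thm:dual-LWP} applied to the dual observable, and the Spohn--Lanford bound for the Boltzmann series, iterated over $[0,T]$ in steps of the local existence time). Subtract, take absolute values, and integrate over $Z_s$ in the restricted set $\mathcal{K}_s\cap\mathcal{U}_s^\eta\cap\mathcal{V}_s^k(T')\cap\{E_s+I_s\leq R^2\}$ with the $\varepsilon^{-k(d-1)}$ weight to recover the seminorm. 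As usual, the difference then splits into a pseudo-trajectory comparison error (BBGKY trajectory against its Boltzmann counterpart for the same creation data) and an initial-data error.

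\textbf{Standard reductions and the one new term.} Each of the customary smooth cutoffs (high kinetic energy, grazing impact parameters, nearby creation points, clustered creation times, velocity-coincidence among created particles) yields bounds controlled in $L^\infty$ by the hypotheses on $f_N^{(s+j)}$ and $f$; because $\Vert\cdot\Vert_{\varepsilon,s,k,\eta,T',R}\leq C(s,k,T',R)\Vert\cdot\Vert_\infty$, these estimates transfer verbatim from \cite{De2017} with no $\varepsilon$-dependent loss. This is the ``free transfer'' alluded to just before the theorem. The one step that genuinely uses the singular-set structure is the initial-data estimate: after the pseudo-trajectory change of variables and the Jacobian identity (\ref{eq:TR-PTR-1}), the integrand becomes $|f_N^{(s+j)}(0)-f^{\otimes(s+j)}(0)|$ pulled back to the pseudo-trajectory endpoint. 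When $Z_s\in\mathcal{V}_s^k(T')$, this endpoint lies in $\mathcal{V}_{s+j}^{k+j}(T'+t)$ by the alternative characterization in Section \ref{sec:TR}; combining the $\varepsilon^{-k(d-1)}$ outer weight, the Jacobian factor $\varepsilon^{-j(d-1)}$, and the combinatorial factor $a_{N,j,s}$ yields in the Boltzmann--Grad scaling a total weight $\ell^{-j}\varepsilon^{-(k+j)(d-1)}$, matching the seminorm at level $(s+j,k+j)$. This term is therefore controlled by a finite sum of $\ell^{-j}\Vert f_N^{(s+j)}(0)-f^{\otimes(s+j)}(0)\Vert_{\varepsilon,s+j,k+j,\eta,T'+t,R'}$, each of which vanishes as $N\to\infty$ by hypothesis.

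\textbf{Main obstacle.} The chief difficulty is ensuring that the BBGKY pseudo-trajectory initiated at $Z_s\in\mathcal{V}_s^k(T')$ is, up to a vanishing error in the $\varepsilon^{-k(d-1)}$-weighted $L^1$, a pure free pseudo-trajectory that can be matched to a Boltzmann pseudo-trajectory with identical creation data. A priori, starting on a singular set of codimension $k(d-1)$ might promote dynamical recollisions that survive the singular-set weighting. The auxiliary sets $\mathcal{K}_s$ and $\mathcal{U}_s^\eta$, with $\eta(\varepsilon)=\sqrt{\varepsilon}$, are inserted precisely to avert this: $\mathcal{K}_s$ removes backward collisions among the original $s$ particles, while $\mathcal{U}_s^\eta$ forces their velocities to be separated, so that any residual recollision of a \emph{created} particle with an earlier one would push the configuration into $\mathcal{V}_{s+j}^{k+j+1}(T'+t)$, where Proposition \ref{prop:Sing-SIZE} supplies one extra factor of $\varepsilon^{d-1}$ --- just enough to absorb the recollision contribution into the final vanishing error.
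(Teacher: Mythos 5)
Your proposal is correct and follows essentially the same route as the paper: transfer all the standard cutoff and comparison errors verbatim via the bound $\Vert\cdot\Vert_{\varepsilon,s,k,\eta,T',R}\leq C(s,k,T',R)\Vert\cdot\Vert_\infty$, and treat the single genuinely new (initial-data) term by combining the Jacobian identity (\ref{eq:TR-PTR-1}) with the fact that particle creation maps $\mathcal{V}_{s+j}^{k+j}$ into $\mathcal{V}_{s+j+1}^{k+j+1}$, so each $\varepsilon^{-(d-1)}$ from the change of variables is absorbed by the seminorm at level $(s+j,k+j)$, reducing everything to the hypothesis on the data. The only cosmetic difference is your handling of residual recollisions via Proposition \ref{prop:Sing-SIZE}, whereas the paper simply excises the bad creation set $\mathcal{B}_j$ (already controlled in $L^\infty$ in the cited work) so that the surviving pseudo-trajectories are recollision-free and the change of variables is quantifiably injective.
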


\subsection{An Example}

We are going to construct sequences of initial data which satisfy the
conditions of Theorem \ref{thm:chaos-1}, but which do not converge uniformly
as required by Lanford's original proof. \cite{L1975} 
We begin with a sequence of functions
$0 \leq f_{0,N} : \mathbb{R}^d \times\mathbb{R}^d
 \rightarrow \mathbb{R}$ such that, for some $\beta_0 > 0$,
\begin{equation}
\sup_{N \in \mathbb{N}} \sup_{x \in \mathbb{R}^d}
\sup_{v \in \mathbb{R}^d}
e^{\frac{1}{2} \beta_0 (|x|^2+|v|^2)} f_{0,N} (x,v) < \infty
\end{equation}
We also assume
\begin{equation}
\int_{\mathbb{R}^d \times \mathbb{R}^d} f_{0,N} (x,v) dx dv = 1
\end{equation}
We furthermore assume that there exists some
$f_0$ in the Schwartz class such that
\begin{equation}
\lim_{N \rightarrow \infty}
\left\Vert f_{0,N} - f_0 \right\Vert_{L^1_{x,v}} = 0
\end{equation}
Finally, we assume there exists a decreasing sequence of open
balls $B_N \subset \mathbb{R}^d \times \mathbb{R}^d$ such
that $\cap_N B_N = \varnothing$ and
\begin{equation}
\liminf_{N\rightarrow \infty}
\inf_{(x,v) \in B_N}
\left( f_{0,N} (x,v) - f_0 (x,v) \right) > 0
\end{equation}
We also assume that the Lebesgue measure of $B_N$ is
at least $C(\log N)^{-1}$ for some constant $C$.

The $N$-particle data is then defined as
\begin{equation}
f_N (Z_N) = \mathcal{Z}_N^{-1} f_{0,N}^{\otimes N} (Z_N)
\mathbf{1}_{Z_N \in \mathcal{D}_N}
\end{equation}
where
\begin{equation}
\mathcal{Z}_N = \int f_{0,N}^{\otimes N} (Z_N) 
\mathbf{1}_{Z_N \in \mathcal{D}_N} dZ_N
\end{equation}
is a normalization factor. In the Boltzmann-Grad scaling
$N\varepsilon^{d-1}=\ell^{-1}$, 
it is possible to show that for all $s\in \mathbb{N}$ there
holds:
\begin{equation}
\label{eq:EX-1}
\limsup_{N\rightarrow \infty} \left\Vert
\mathbf{1}_{Z_s \in \mathcal{D}_s}
\left( f_N^{(s)} (0,Z_s) - f_{0,N}^{\otimes s} (Z_s)
\right) \right\Vert_{L^\infty_{Z_s}} = 0
\end{equation}
(See \cite{GSRT2014} or \cite{De2017} for a proof.)
The functions $f_{0,N}^{\otimes s}$ do not converge uniformly
on compact sets, so by (\ref{eq:EX-1}), the functions
$f_N^{(s)} (0)$ also do not converge uniformly on compact sets.
Therefore the hypotheses of Lanford's theorem are \emph{not}
fulfilled. 

We are going to show that the conditions of
Theorem \ref{thm:chaos-1} are satisfied, using the triangle
inequality. Indeed, for any $Z_s \in \mathcal{D}_s$, we have
\begin{equation}
\label{eq:EX-2}
\begin{aligned}
& \left| f_N^{(s)} (0,Z_s) - f_0^{\otimes s} (Z_s) \right| \leq
\left| f_N^{(s)} (0,Z_s) - f_{0,N}^{\otimes s} (Z_s)\right| + \\
&\qquad \qquad \qquad  + \sum_{i=1}^s
f_{0,N}^{\otimes (i-1)} (Z_{1:(i-1)})
\left| f_{0,N} (z_i) - f_0 (z_i) \right|
f_0^{\otimes (s-i)} (Z_{(i+1):s})
\end{aligned}
\end{equation}
The first term on the right hand side of (\ref{eq:EX-2}) is estimated
using (\ref{eq:EX-1}) and the bound
\begin{equation}
\left\Vert f^{(s)} \right\Vert_{\varepsilon,s,k,\eta,T^\prime,R}
\leq C(s,k,T^\prime,R) \left\Vert f^{(s)}
\right\Vert_{\infty}
\end{equation}
The remaining terms are all estimated in the same way so we
only consider the case $i=1$. We are trying to bound
\begin{equation}
\left\Vert \left|
f_{0,N}(z_1) - f_0 (z_1) \right|
f_0^{\otimes (s-1)} (Z_{2:s}) \right\Vert_{\varepsilon,s,k,
\eta(\varepsilon),T^\prime,R}
\end{equation}
Since $f_0 \leq C e^{-\frac{1}{2} \beta_0 (|x|^2+|v|^2)}$, we are
left with
\begin{equation}
C^s \left\Vert \left|
f_{0,N}(z_1) - f_0 (z_1) \right|
e^{-\beta_0 (E_{s-1}+I_{s-1}) (Z_{2:s})} \right\Vert_{\varepsilon,s,k,
\eta(\varepsilon),T^\prime,R}
\end{equation}
Now the point is that the condition $Z_s \in \mathcal{V}_s^k (T^\prime)$
can be interpreted as saying that $z_1$ is chosen
\emph{arbitrarily}, and then the $k$ collision constraints are simply
constraints imposed on the \emph{remaining}
particles. Since we have an $L^\infty$ bound in all but the
first coordinate, we gain a factor of $\varepsilon^{d-1}$ for each
of the $k$ collision constraints, so that
\begin{equation}
\begin{aligned}
& \left\Vert \left| f_{0,N} (z_1) - f_0 (z_1) \right|
e^{-\beta_0 (E_{s-1}+I_{s-1}) (Z_{2:s})} 
\right\Vert_{\varepsilon,s,k,\eta(\varepsilon),T^\prime,R}
\leq \\
& \qquad \qquad \qquad \qquad \qquad \qquad \qquad
\leq C(s,k,T^\prime,R) \left\Vert 
f_{0,N} - f_0 \right\Vert_{L^1_{x,v}}
\end{aligned}
\end{equation}
which yields the desired bound.

\begin{remark}
Note that the conclusion of Theorem \ref{thm:chaos-1} implies that
the first marginal $f_N^{(1)} (t)$ converges to $f (t)$ in the
norm topology of $L^1_{x,v}$. Similarly, as the example illustrates,
the hypotheses of the theorem are related to $L^1$ convergence of the
first marginal at $t=0$. Hence, Theorem \ref{thm:chaos-1} provides
a microscopic interpretation for $L^1$ convergence in the
Boltzmann-Grad limit.
\end{remark}

\subsection{Proof of Theorem \ref{thm:chaos-1}}

Following \cite{De2017}, we may assume that, for some $\beta_0 > 0$,
$\mu_0 \in \mathbb{R}$, there holds
\begin{equation}
\sup_{1\leq s \leq N} \sup_{Z_s \in \mathcal{D}_s} 
\left| f_N^{(s)} (0,Z_s) \right| e^{\beta_0 (E_s+I_s) (Z_s)}
e^{\mu_0 s} \leq 1
\end{equation}
\begin{equation}
\sup_{1\leq s \leq N} \sup_{Z_s \in \mathcal{D}_s} 
\left| f^{\otimes s} (0,Z_s) \right| e^{\beta_0 ( E_s+I_s) (Z_s)}
e^{\mu_0 s} \leq 1
\end{equation}
The following error estimate was proven on a small time interval $[0,T_L]$,
using suitable truncations in $L^\infty$: (see \cite{De2017} for details)
\begin{equation}
\label{eq:CONV-est}
\begin{aligned}
& \left| \left( f_N^{(s)} - f^{\otimes s}\right) (t,Z_s) \right|
\mathbf{1}_{Z_s \in \mathcal{K}_s  \cap \mathcal{U}_s^{\eta}}
\mathbf{1}_{(E_s+I_s) (Z_s) \leq 2 R^2} \\
& \leq 3 e^{-(\mu_0 - 2)s} \left( e^{-\frac{1}{2} \beta_0 R^2} + e^{-n}\right) +\\
& + \left[ 1-\left(1-\frac{n}{N}\right)^n\right] e^{-\mu_0 s}
e^{C_d \ell^{-1} n R^{d+1} e^{-\mu_0} T_L} + \\
& + 2 e^{-\mu_0 s} n^2 \mathcal{A}_{n,R} \left[ \alpha + \frac{y}{\eta T_L}
+ C_{d,\alpha} \left( \frac{\eta}{R}\right)^{d-1} +
C_{d,\alpha} \theta^{(d-1)/2} \right] + \\
& + C_d n^{\frac{5}{2}} R^{-1} e^{|\mu_0| n} \varepsilon
e^{C_d \ell^{-1} n R^{d+1} e^{-\mu_0} T_L} + \\
& + C_d n^2 \varepsilon e^{C_d \ell^{-1} n R^{d+1} e^{-\mu_0} T_L}
\sup_{\substack{1\leq j \leq n \\ Z_j \in  \mathbb{R}^{2dj}}}
\left| \nabla_{Z_j} \left( f^{\otimes j}\right) (0,Z_j) \right|_2
\mathbf{1}_{(E_j+I_j) (Z_j) \leq 2 R^2} + \\
& + C_d e^{C_d \ell^{-1} n R^{d+1} e^{-\mu_0} T_L} \times \\
& \qquad \qquad \quad \times
\sup_{\substack{1\leq j \leq n \\ Z_j \in 
\mathbb{R}^{2dj}}}
\left| \left( f_N^{(j)} - f^{\otimes j} \right)(0,Z_j)\right|
\mathbf{1}_{Z_j \in \mathcal{K}_j \cap \mathcal{U}_j^\eta}
\mathbf{1}_{(E_j+I_j) (Z_j) \leq 2 R^2} \\
& = I + II + III + IV + V + VI
\end{aligned}
\end{equation}
We care only about the last term, $VI$, which is (ignoring the prefactor):
\begin{equation}
\sup_{\substack{1\leq j \leq n \\ Z_j \in \mathbb{R}^{dj}\times
\mathbb{R}^{dj}}}
\left| \left( f_N^{(j)} - f^{\otimes j} \right)(0,Z_j)\right|
\mathbf{1}_{Z_j \in \mathcal{K}_j  \cap \mathcal{U}_j^\eta}
\mathbf{1}_{(E_j+I_j) (Z_j) \leq 2 R^2}
\end{equation}
It is this last term which we desire to estimate in $L^1$ instead of
$L^\infty$. 
From the proof of \cite{De2017},
we are free to replace $VI$ in (\ref{eq:CONV-est}) by $VI^\prime$ where
\begin{equation}
\label{eq:CONV-VIprime}
\begin{aligned}
& VI^\prime = \\
& = \sum_{j=0}^{n-s} \sum_{i_1 = 1}^s \dots \sum_{i_j = 1}^{s+j-1} \ell^{-j}
\int_0^t \dots \int_0^{t_{j-1}} \int_{\left(B_{2R}^d\right)^j}
\int_{\left(\mathbb{S}^{d-1}\right)^j}
\prod_{m=1}^j d\omega_m dv_{s+m} dt_m \times \\
& \times \left( 1-\mathbf{1}_{\mathcal{B}_j}\right)  \times \\
& \times
\left( 
\begin{aligned}
& \left|b_{s,s+j} [\cdot] \right|
\mathbf{1}_{(E_{s+j}+I_{s+j}) (Z_{s,s+j} [\cdot]) \leq 2R^2} \times \\
& \qquad \qquad \qquad \qquad \quad \times
\left| f_{N}^{(s+j)} (0,Z_{s,s+j}[\cdot]) - f^{\otimes (s+j)}(0,Z_{s,s+j}[\cdot])
 \right| 
\end{aligned}
\right)\\
& \qquad \qquad \qquad \qquad \qquad \qquad\qquad \qquad \qquad
 \left[ Z_s, t; \left\{ t_r,v_{s+r},\omega_r,i_r\right\}_{r=1}^j \right]
\end{aligned}
\end{equation}
The set $\mathcal{B}_j$ specifies a ``bad set'' of particle creations which may
lead to recollisions. In particular, $Z_{s,s+j} \left[ \cdot \right]$ is in
$\mathcal{K}_{s+j}  \cap \mathcal{U}_{s+j}^\eta$.

Recall that
\begin{equation}
\left\Vert f^{(s)} \right\Vert_{\varepsilon,s,k,\eta,T^\prime,R} \leq
C(s,k,T^\prime,R)
\left\Vert f^{(s)} \right\Vert_\infty
\end{equation}
so the terms $I$ through $V$ in (\ref{eq:CONV-est}) are disposed with easily.
We only have to estimate 
\begin{equation}
\left\Vert VI^\prime \right\Vert_{\varepsilon,s,k,\eta,T^\prime,R}
\end{equation}
We can do that, in fact, because pseudo-trajectories obey a determinant
identity (see (\ref{eq:TR-PTR-1}) or \cite{PS2015II}):
\begin{equation}
\label{eq:CONV-det}
\begin{aligned}
& \left| \det
\frac{\partial Z_{s,s+j} \left[ Z_s,t;
\left\{ t_r,v_{s+r},\omega_r,i_r\right\}_{r=1}^j \right]}
{\partial Z_s \partial t_1 \dots \partial t_j \partial v_{s+1} \dots \partial v_{s+j}
\partial \omega_1 \dots \partial \omega_j}
\right| = \\
& \qquad \qquad \qquad \qquad \qquad
 = \varepsilon^{j(d-1)} \left| b_{s,s+j} \left[ Z_s,t;
\left\{ t_r,v_{s+r},\omega_r,i_r\right\}_{r=1}^j \right] \right|
\end{aligned}
\end{equation}
Each additional collision integral in (\ref{eq:CONV-VIprime})
 brings down an extra power of
$\varepsilon^{-(d-1)}$ according to (\ref{eq:CONV-det}), but this is exactly compensated
by an extra $\varepsilon^{d-1}$ coming from the definition of the norm
$\left\Vert f^{(s+j)} \right\Vert_{\varepsilon,s+j,k+j,\eta,T^\prime,R}$. This is because particle
creation maps $\mathcal{V}_{s+j}^{k+j} (T^\prime)$ to $\mathcal{V}_{s+j+1}^{k+j+1} (T^\prime)$.
In fact the only difficulty is that the map $Z_{s,s+k} [\dots]$ is not injective,
but the lack of injectivity can be quantified since there are \emph{no recollisions}
for any pseudo-trajectory appearing in our estimate (at least along the time interval
of interest).

Altogether we can write (up to combinatorial constants)
\begin{equation}
\begin{aligned}
\left\Vert VI^\prime \right\Vert_{\varepsilon,s,k,\eta,T^\prime-t,R}
\lesssim
\sum_{j=0}^{n-s}
\left\Vert 
f_N^{(s+j)} (0) - f^{\otimes (s+j)} (0)
\right\Vert_{\varepsilon,s+j,k+j,\eta,T^\prime,R}
\end{aligned}
\end{equation}
which tends to zero as $N\rightarrow \infty$, by assumption. Taking limits as
in \cite{De2017}, we are able to conclude.

\bibliography{duality}

\end{document}